\tikzset{
    triple/.style args={[#1] in [#2] in [#3]}{
        #1,preaction={preaction={draw,#3},draw,#2}
    },
    dt/.style={
        radius=0.1
    }
}      
\theoremstyle{plain}
\newtheorem{thm}{Theorem}[section]
\newtheorem{lem}[thm]{Lemma}
\theoremstyle{definition}
\newtheorem*{ack}{Acknowledgement}
\theoremstyle{remark}
\newtheorem*{rmk}{Remark}
\numberwithin{equation}{section}
\newcommand{\bC}{\mathbb{C}}
\newcommand{\bF}{\mathbb{F}}
\newcommand{\bN}{\mathbb{N}}
\newcommand{\bQ}{\mathbb{Q}}
\newcommand{\bR}{\mathbb{R}}
\newcommand{\bZ}{\mathbb{Z}}
\newcommand{\kk}{\mathbf{k}}
\newcommand{\cB}{\mathcal{B}}
\newcommand{\cH}{\mathcal{H}}
\newcommand{\cO}{\mathcal{O}}
\newcommand{\cN}{\mathcal{N}}
\newcommand{\cK}{\mathcal{K}}
\newcommand{\fg}{\mathfrak{g}}
\newcommand{\fP}{\mathfrak{P}}
\newcommand{\Fq}{\mathbb{F}_q}
\newcommand{\Fqbar}{\overline{\mathbb{F}_q}}
\newcommand{\Ftbar}{\overline{\mathbb{F}_2}}
\newcommand{\und}[1]{\underline{#1}}
\newcommand{\spn}[1]{\textup{span}\left(#1\right)}
\DeclarePairedDelimiter\ceil{\lceil}{\rceil}
\DeclarePairedDelimiter\floor{\lfloor}{\rfloor}
\DeclarePairedDelimiter\bs{[}{]}
\DeclarePairedDelimiter\br{\langle}{\rangle}
\DeclarePairedDelimiter\dbs{\llbracket}{\rrbracket}
\newcommand{\qbinom}[2]{\genfrac{[}{]}{0pt}{}{#1}{#2}}
\newcommand{\dqbinom}[2]{\genfrac{\llbracket}{\rrbracket}{0pt}{}{#1}{#2}}
\DeclareMathOperator{\im}{\textup{im}} 
\DeclareMathOperator{\Ind}{\textup{Ind}}
\DeclareMathOperator{\Ad}{\textup{Ad}}
\DeclareMathOperator{\Irr}{\textup{Irr}}
\DeclareMathOperator{\Id}{\textup{Id}}
\DeclareMathOperator{\Lie}{\textup{Lie}}
\DeclareMathOperator{\sym}{\mathfrak{S}}
\DeclareMathOperator{\Krew}{\textnormal{Krew}}
\DeclareMathOperator{\Nar}{\textnormal{Nar}}
\DeclareMathOperator{\Rx}{\textnormal{Ref}}
\DeclareMathOperator{\NC}{\textnormal{NC}}
\title{$q$-Kreweras numbers for coincidental Coxeter groups attached to limit symbols}
\author{Dongkwan Kim}
\address{School of Mathematics\\
  University of Minnesota Twin Cities\\
  Minneapolis, MN 55455\\
  U.S.A.}
\email{kim00657@umn.edu}
\date{\today}							
\begin{document}

\begin{abstract} For a coincidental Coxeter group, i.e. of type $A_{n-1}, BC_n, H_3,$ or $I_2(m)$, we define the corresponding $q$-Kreweras numbers attached to limit symbols in the sense of Shoji. The construction of these numbers resembles the argument of Reiner and Sommers and these two share similar properties, but our version is slightly different from and combinatorially simpler than theirs. We also study the combinatorial properties of our $q$-Kreweras numbers, i.e. positivity, relation with $q$-Narayana numbers, and cyclic sieving phenomenon.
\end{abstract}

\setcounter{tocdepth}{1}
\setcounter{section}{-1}

\maketitle

\renewcommand\contentsname{}
\tableofcontents

\section{Introduction}
Catalan numbers are one of the most prominent sequences in algebraic combinatorics and have a lot of applications. For example, this sequence counts binary trees with fixed vertices, Dyck paths of fixed length, non-crossing partitions of points lying on the circumference of a circle, triangulation of a regular polygon, to name a few. We refer readers to Stanley's expository monograph \cite{sta15} for more details and its history.

This series has a generalization to any complex reflection group $W$ so that the original one corresponds to the symmetric group. Moreover, similar to many other sequences in algebraic combinatorics, these Catalan numbers admit a $q$-deformation. (cf. \cite{gg12}) Namely, suppose that $d_1, \ldots, d_n$ are the degrees of fundamental invariants of $W$. Then the corresponding $q$-Catalan number with parameter $t$ is given by
\[ \textnormal{Cat}(W, t;q) = \prod_{i=1}^r \frac{\bs{t-1+d_i}}{\bs{d_i}} \]
where $\bs{k}_y=\sum_{i=0}^{k-1} y^i$. If $W$ is the symmetric group permuting $n$ elements, $t=n+1$, and $q=1$ then it becomes the usual Catalan number $C_n=\frac{1}{n+1}\binom{2n}{n}$.

When the complex reflection group $W$ is a coincidental type, i.e. if $\{d_1, \ldots, d_n\}$ forms an arithmetic sequence, the corresponding $q$-Catalan numbers are refined by another $q$-sequence called $q$-Narayana numbers. Following \cite{rss20}, for given $k$ and $t$ we define 
\[\Nar(W, k, t;q) = q^{(t-ak-1)(n-k)} \qbinom{n}{k}_{q^a} \frac{\prod_{i=0}^{k-1}(1-q^{t-1-ai})}{\prod_{i=0}^{k-1}(1-q^{e+1+ai})}.\]
where $a, e \in \bZ_{>0}$ satisfies $\{d_1, \ldots, d_n\} = \{e+ia \mid 0\leq i \leq n-1\}$ and $\qbinom{n}{k}_{q^a}= \prod_{i=n-k+1}^n [i]_{q^a} /\prod_{i=1}^k [i]_{q^a}$. This sequence then satisfies that $\textnormal{Cat}(W, t;q)=\sum_{k=1}^{n}\Nar(W, k, t;q)$. This decomposition also has an interpretation in terms of non-crossing partitions; if $W$ is the symmetric group permuting $n$ elements, $t=n+1$, and $q=1$ then $\Nar(W, k, n+1;1)$ enumerates the number of non-crossing partitions of $n$ with exactly $k$ pieces.

The main object in this paper is $q$-Kreweras numbers, which refines $q$-Narayana numbers when $W$ is of coincidental type. Let $\Irr(W)$ be the set of complex irreducible representations of $W$ up to isomorphism. For each $\chi \in \Irr(W)$, we wish to find a systematic way to define $\Krew(W, \chi, t;q)$, called the $q$-Kreweras number attached to $\chi$, such that $\Nar(W, k, t;q)= \sum_{f(\chi) = k} \Krew(W, \chi, t;q)$ where $f: \Irr(W) \rightarrow \bN=\{0,1,2, \ldots\}$ is a certain level function. Moreover, if $W$ is the symmetric group permuting $n$ elements, $t=n+1$, and $q=1$, so that $\chi$ corresponds to some partition $\lambda \vdash n$, then we hope that $\Krew(W, \chi, n+1;1)$ enumerates the number of non-crossing partitions where the sizes of parts are given by $\lambda$.

In \cite{rs18}, Reiner and Sommers defined a version of $q$-Kreweras numbers for Weyl groups which satisfies nice combinatorial properties. More precisely, they defined $q$-Kreweras numbers indexed by the nilpotent orbits in the corresponding Lie algebra instead of $\Irr(W)$ and show that they are refinements of $q$-Narayana numbers when $W$ is of type $A, B,$ and $C$. Moreover, they proved that their $q$-Kreweras numbers enjoy cyclic sieving phenomena with respect to the generalization of non-crossing partitions studied by Armstrong \cite{arm09}.

Their argument is based on Springer theory. Namely, Sommers \cite{som11} studied the decomposition of a certain virtual graded $W$-representation $\cH_t$, which naturally arises in the representation theory of double affine Hecke algebras, into the Green functions coming from the Springer theory of the corresponding reductive group. Then $q$-Kreweras numbers are defined to be the ``coefficients'' in this decomposition. However, since the Springer correspondence (in type $B, C$, and $D$) from the set of nilpotent orbits (with respect to the trivial local system) to $\Irr(W)$ is not in general bijective but only injective, their $q$-Kreweras numbers are not parametrized by $\Irr(W)$ but nilpotent orbits. Indeed, if one tries to expand their definition to all of $\Irr(W)$ using the Springer correspondence with respect to various local systems of nilpotent orbits, then some of $q$-Kreweras numbers become identically zero.

One of the main goals of this paper is to overcome this limitation. There is a general method to calculate the Green functions, the algorithm originally developed by Shoji \cite{sho83} and Lusztig \cite{lus86}, which is now commonly called the Lusztig-Shoji algorithm. Later it is generalized by Shoji \cite{sho01}, \cite{sho02} so that it is applied to complex reflection groups $G(e, p, n)$. In particular, he investigated the Green functions attached to so-called limit symbols \cite{sho04}, which are combinatorially simpler than the usual Green functions.

Motivated from his work, when $W$ is a coincidental Coxeter group, i.e. of type $A_{n-1}$, $BC_n$, $H_3$, or $I_2(m)$, we define the $q$-Kreweras numbers similarly to the argument of Reiner and Sommers but using the Green functions attached to limit symbols. This definition has certain advantages compared to their work. First, our method allows to define $q$-Kreweras numbers for type $H_3$ and $I_2(m)$, which are not crystallographic so the Springer theory in the usual sense is not applicable. Moreover, in type $BC_n$ the $q$-Kreweras numbers only depend on the type of $W$ whereas in \cite{rs18} there are two different series coming from the Springer theory of $SO_{2n+1}$ and $Sp_{2n}$, respectively. Also, our $q$-Kreweras numbers are indexed by $\Irr(W)$, or more precisely none of $q$-Kreweras numbers is identically zero. We believe that our version is combinatorially simpler and more uniform than their results.


The main results of this paper are summarized as follows. Here $W$ is a coincidental Coxeter group, i.e. of type $A_{n-1}$, $BC_n$, $H_3$, or $I_2(m)$.
\begin{enumerate}[label=\textbullet]
\item For each $\chi \in \Irr(W)$, we provide the closed formula of the $q$-Kreweras number $\Krew(W, \chi, t;q)$ (which also shows that it is not identically zero).
\item Theorem \ref{thm:int}: we show that $\Krew(W, \chi, t;q) \in \bZ[q]$ when $t$ is very good (see Theorem \ref{thm:verygood} for the definition of very good $t$).
\item Theorem \ref{thm:pos}: when $t$ is very good, we show that $\Krew(W, \chi, t;q) \in \bN[q]$ if and only if $\chi$ corresponds to a parabolic subgroup of $W$ (see \ref{sec:pos} for the correspondence $\Phi$ from parabolic subgroups from $W$ to $\Irr(W)$).
\item Theorem \ref{thm:nar}: we show that $\Krew(W, \chi, t;q)$ refines the $q$-Narayana numbers.
\item Theorem \ref{thm:cyc}: we prove that $\Krew(W, \chi, t;q)$ exhibits certain cyclic sieving phenomena with respect to the generalization of non-crossing partitions defined by Armstrong \cite{arm09}.
\end{enumerate}

Our argument is given case-by-case. For type $A_{n-1}$, the Green functions attached to limit symbols are the same as the ones coming from the usual Springer theory, and thus there is nothing new compared to \cite{rs18}. For type $BC_n$, we exploit the Springer theory of $\Lie Sp_{2n}(\Ftbar)$ and the exotic nilcone, which are known to have a strong connection with the Green functions attached to limit symbols. For type $H_3$ and $I_2(m)$, our proof mainly relies on direct calculation. For these results, we use the computer program packages such as \cite{chevie}, \cite{lsalg}, and \cite{sagemath}.

In this paper we only discuss coincidental Coxeter groups. However, the Green functions attached to limit symbols are well-defined for any complex reflection group of the form $G(r,p,n)$. Therefore, it is natural to consider a generalization of our results to such complex reflection groups, or at least for coincidental types. Moreover, even when we restrict our attention to Coxeter groups, we may choose different symbols to perform the Lusztig-Shoji algorithm, which provides different kinds of Green functions. It would be interesting to ask for which choice of symbols such $q$-Kreweras numbers are well-defined and enjoy nice combinatorial properties.

This paper is organized as follows. In Section \ref{sec:defnot} we discuss basic definitions and notations used throughout this paper; in Section \ref{sec:coin} we recall the definition and the classification of coincidental Coxeter groups and parametrizations of their complex irreducible representations; in Section \ref{sec:krelim} we study the definition and the properties of $q$-Kreweras numbers and state the main theorems; from Section \ref{sec:typeA} to the end we prove the main theorems case-by-case.

\begin{ack} The author is grateful to Eric Sommers for his helpful comments and suggestions. In particular, Section \ref{sec:h3} and \ref{sec:i2m} would not be written without his suggestion to look into other coincidental ones than type $A$ and $BC$.
\end{ack}

\section{Definitions and notations}\label{sec:defnot}
For $a, b \in \bZ$, we set $[a,b] = \{ x\in \bZ \mid a\leq x \leq b\}$.
For $x \in \bR$, we set $\floor{x}$ to be the largest integer not greater than $x$ and $\ceil{x}$ to be the smallest integer not smaller than $x$. In other words, we have $\floor{x}, \ceil{x} \in \bZ$, $\floor{x}\leq x < \floor{x}$, and $\ceil{x}-1<x\leq \ceil{x}$.

For $m \in \bN$, we define $\bs{m}_y=\sum_{i=0}^{m-1}y^i$. If $y\neq 1$ then we have $\bs{m}_y = (y^m-1)/(y-1)$. We set $\bs{m}_y! = \prod_{i=1}^m\bs{m}_y$ and 
\[\qbinom{m_1+m_2+\cdots+m_k}{m_1, m_2, \ldots, m_k}_y=\frac{\bs{m_1+m_2+\cdots+m_k}_y!}{\bs{m_1}_y!\bs{m_2}_y!\cdots\bs{m_k}_y!}.\]
 When $k=2$, we also write $\qbinom{m_1+m_2}{m_1}_y=\qbinom{m_1+m_2}{m_2}_y$ instead of $\qbinom{m_1+m_2}{m_1,m_2}_y$. When $y$ equals $q$ or $q^2$, we also write $\bs{m}_q = \bs{m}$, $\bs{m}_{q^2} = \dbs{m}$, $\bs{m}_q!=\bs{m}!$, $\bs{m}_{q^2}!=\dbs{m}!$, etc. Note that $\bs{m}_y, \bs{m}_y!, \qbinom{m_1+m_2+\cdots+m_k}{m_1, m_2, \ldots, m_k}_y$ are elements in $\bN[y]$, i.e. polynomials in $y$ with nonnegative integer coefficients.

A partitions is an integer sequence $\lambda=(\lambda_1, \ldots, \lambda_l)$ such that $\lambda_1\geq \cdots \geq \lambda_l >0$. In such a case we define its length to be $l(\lambda)=l$ and its size to be $|\lambda|=\sum_{i=1}^l\lambda_i$. If $i>l$ then we set $\lambda_i=0$. We set the weighted size of $\lambda$ to be $z(\lambda) =\sum_{i\geq 1}(i-1)\lambda_i = \sum_{j\geq 1}\lambda_j^T(\lambda_j^T-1)/2$ where $\lambda^T = (\lambda_1^T,\lambda_2^T, \ldots)$ is the conjugate partition of $\lambda$.
We write $\fP_n$ to be the set of partitions of size $n$. For $n \in \bZ_{>0}$ and $\lambda=(\lambda_1, \ldots, \lambda_l)$, we set $\lambda_{\geq n}=(\lambda_n, \ldots, \lambda_l)$. For two partitions $\lambda=(\lambda_1, \lambda_2, \ldots)$ and $\mu=(\mu_1, \mu_2, \ldots)$, we define $\lambda+\mu = (\lambda_1+\mu_1, \lambda_2+\mu_2, \ldots)$. Also we define $\lambda \cup \mu$ to be the partition obtained by combining parts of $\lambda$ and $\mu$ and rearranging them so that the result is in decreasing order. Let $m_\lambda(r)$ be the multiplicity of $r$ in $\lambda$. We often write $(1^{m_\lambda(1)}2^{m_\lambda(2)}\cdots)$ to indicate $\lambda$. If $m_\lambda(r) \neq 0$ then we also write $r\in \lambda$.

Let $\fP_{n,2}$ to be the set of pairs of partitions $(\lambda, \mu)$ such that $|\lambda|+|\mu|=n$. Its elements are called bipartitions of $n$. Here we list some notations which will be used in Section \ref{sec:typebc} for the sake of readers' convenience.
\begin{align*}
l(\mu,\nu) &= \max\{l(\mu)-1, l(\nu)\} = l(\mu_{\geq 2}+\nu)
\\m_{\mu,\nu}(r) &= \floor*{\frac{m_{(\mu+\nu)\cup(\mu_{\geq 2}+\nu)}(r)}{2}}
\\L(\mu,\nu) &= l(\mu,\nu) - \sum_{r \geq 1}m_{\mu,\nu}(r)
\\z(\mu,\nu) &= 2z(\mu)+2z(\nu)+|\nu| = 2z(\mu+\nu)+|\nu|
\\d(\mu,\nu)& = \sum_{r\geq 1} m_{\mu,\nu}(r)(m_{\mu,\nu}(r)+1)
\end{align*}

For a finite group $G$, let $\Irr(G)$ be the set of complex irreducible representations of $G$ (up to isomorphism). Also we let $\cK(G)$ be its Grothendieck group which is a free $\bZ$-module with basis $\Irr(G)$, and $\cK(G)^+$ be the submonoid of $\cK(G)$ generated by $\Irr(G)$. For any ring $R$, we write $\cK(G)_R = \cK(G) \otimes R$. 

\section{Coincidental Coxeter groups} \label{sec:coin}
\subsection{Classification of coincidental Coxeter groups}
Let $W$ be a Coxeter group and $e_1, \ldots, e_n$ be its exponents such that $e_1<\cdots <e_n$. We say that $W$ is coincidental if $e_1, \ldots, e_n$ is an arithmetic sequence, i.e. there exists $a\in \bZ_{>0}$ such that $e_i = e_1+(i-1)a$ for $i \in [1,n]$. Such groups are classified as follows.
\begin{enumerate}
\item Type $A_{n-1}$: exponents are $1, 2, \ldots, n-1$ and the Coxeter number is $n$.
\item Type $BC_{n}$: exponents are $1, 3, \ldots, 2n-1$ and the Coxeter number is $2n$.
\item Type $H_3$: exponents are $1, 5, 9$ and the Coxeter number is $10$.
\item Type $I_{2}(m)$: exponents are $1,m-1$ and the Coxeter number is $m$.
\end{enumerate}
Throughout this paper, we assume that $W$ is one of these groups unless otherwise specified.


\subsection{Irreducible representations}
Let us discuss the parametrization of irreducible representations of $W$ (up to isomorphism). Here we adopt Carter's notations, see \cite[Chapter 13.2]{car93}. First suppose that $W$ is of type $A_{n-1}$, so $W$ is the symmetric group permuting $n$ elements. Then its irreducible representations are parametrized by partitions of $n$. Let us write $\chi^\lambda$ to denote such a representation parametrized by $\lambda$. For example, $\chi^{(n)}$ is the identity representation and $\chi^{(1^n)}$ is the sign representation.

This time suppose that $W$ is of type $BC_n$, so $W$ is the hyperoctahedral group of rank $n$. In this case its irreducible representations are parametrized by bipartitions of $n$. Let us write $\chi^{(\mu,\nu)}$ to denote such a representation parametrized by $(\mu,\nu) \in \fP_{n,2}$. For example, $\chi^{((n),\emptyset)}$ is the identity representation and $\chi^{(\emptyset, (1^n))}$ is the sign representation.

Before we proceed let us define the fake degree of an irreducible representation. Let $S^*(V)=\bigoplus_{i\in \bN}S^i(V)q^i$ be the symmetric algebra of the reflection representation of $W$ which is a graded $W$-representation such that each $S^i(V)$ is of degree $i$. (Here, $q$ is a degree-keeping indeterminate.) Then for each $\chi \in \Irr(W)$, there exists an integer $e\in \bN$ such that $\br{\chi, S^i(V)}\neq 0 \Rightarrow e\leq i$ and $\br{\chi, S^e(V)} = 1$. We say that $e$ is the fake degree of $\chi$ and write $b(\chi) = e$.

When $W$ is of type $H_3$, each irreducible representation is completely determined by its dimension and fake degree. We write $\phi_{d,e}$ to be an irreducible representation such that $\dim \phi_{d,e} = d$ and $b(\phi_{d,e}) = e$. Then we have:
\[\Irr(W) = \{\phi_{1,0},\phi_{3,1},\phi_{5,2},\phi_{4,3},\phi_{3,3},\phi_{4,4},\phi_{5,5},\phi_{3,6},\phi_{3,8},\phi_{1,15}\}.\]

Now suppose that $W$ is of type $I_2(m)$. If $m$ is odd then similarly we have
\[\Irr(W) = \{\phi_{1,0}, \phi_{1,m}\} \cup \{\phi_{2,r} \mid r \in [1,(m-1)/2]\}.\]
If $m$ is even, then there are two irreducible representations both of whose dimension is $1$ and fake degree is $m/2$. Here we write $\phi_{1,m/2}', \phi_{1,m/2}''$ to distinguish these two. Then we have
\[\Irr(W) = \{\phi_{1,0},  \phi_{1,m/2}', \phi_{1,m/2}'',\phi_{1,m}\} \cup \{\phi_{2,r} \mid r \in [1,m/2-1]\}.\]

\section{$q$-Kreweras numbers attached to limit symbols} \label{sec:krelim}
We keep assuming that $W$ is a coincidental Coxeter group. Here we provide a definition of $q$-Kreweras numbers attached to limit symbols. To this end we start with the corresponding Green functions of $W$. 
\subsection{Green functions attached to limit symbols}
Recall the Lusztig-Shoji algorithm with respect to limit symbols as follows. We choose a total order on $\Irr(G)$, say $\leq_p$, which satisfies that $\chi \leq_p \chi' \Rightarrow b(\chi) \geq b(\chi')$. In each type this order is chosen as follows.
\begin{enumerate}
\item Type $A_{n-1}$: $\Irr(W)$ is parametrized by $\fP_n$. Here we choose any linearization of dominance order on $\fP_n$.
\item Type $BC_n$: $\Irr(W)$ is parametrized by $\fP_{n,2}$. To each bipartition $(\mu,\nu) \in \fP_{n,2}$ we attach a sequence $(\mu_1, \nu_1, \mu_2, \nu_2, \ldots)$ which is not necessarily a partition. Then we choose any linearization of dominance order on these sequences.
\item Type $H_3$: we choose the order as follows.
\[\phi_{1,0}>_p\phi_{3,1}>_p\phi_{5,2}>_p\phi_{4,3}>_p\phi_{3,3}>_p\phi_{4,4}>_p\phi_{5,5}>_p\phi_{3,6}>_p\phi_{3,8}>_p\phi_{1,15}\]
\item Type $I_2(m)$: if $m$ is odd, then the order is uniquely determined. If $m$ is even then there are two possible choices of linear orders and we take either one.
\end{enumerate}

Now we run the Lusztig-Shoji algorithm following \cite{sho01, sho02, sho04} with respect to the total order described above. If $W$ is of type $A_{n-1}$ or $BC_n$, this process is exactly the same as described in \cite{sho04}. In the case of $H_3$ and $I_2(m)$, there does not exist such a ``limit symbol'' but we may still follow the argument therein to calculate the Green functions with respect to the order described above. By abuse of terminology, we still say that these are the Green functions attached to limit symbols.

\begin{rmk} Since we use a total order to perform the Lusztig-Shoji algorithm, the corresponding ``$P$-matrix'' is not only block triangular but indeed triangular, i.e. each block consists of one irreducible representation. Moreover, the ``$a$-function'' of each irreducible representation is equal to its fake degree.
\end{rmk}

Let $Q_\chi\in \cK(W)_{\bQ(q)}$ be the Green function of $\chi$ as a result of the aforementioned algorithm. In particular, as a graded $W$-representation with $\deg q=1$, $Q_{Id}$ is isomorphic to the coinvariant algebra of $W$. In crystallographic cases, it is the total cohomology of the corresponding flag variety with the usual Springer $W$-action where the degree of the $2i$-th cohomology group is $i$. Then it is known that $\{Q_\chi \mid \chi \in \Irr(W)\}$ is a basis of $\cK(W)_{\bQ(q)}$. Moreover, in our case we have the following stronger statement.
\begin{thm} Suppose that $W$ is a coincidental Coxeter group. Then for $\chi \in \Irr(W)$, $Q_\chi$ (with respect to the limit symbol) is contained in $\cK(W)^+[q]$. In other words, $Q_\chi$ is a genuine $\bN$-graded $W$-representation.
\end{thm}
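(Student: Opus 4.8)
The plan is to argue case-by-case according to the four types of coincidental Coxeter groups, since the Green functions attached to limit symbols carry a different geometric meaning in each type. By the preceding remark the associated $P$-matrix is lower unitriangular, so each $Q_\chi$ is a priori a $\bQ(q)$-linear combination of the irreducible characters; since $\cK(W)^+[q]$ consists of the nonnegative integral combinations of $\Irr(W)$ with coefficients in $\bN[q]$, the entire content of the statement is that every coefficient appearing in the expansion of $Q_\chi$ lies in $\bN[q]$. Thus in each type I need to show that the relevant transition polynomials are genuinely nonnegative.

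For type $A_{n-1}$ I would invoke the identification (recalled in Section \ref{sec:typeA}) of the type $A$ limit symbol with the symbol governing the ordinary Springer theory of $GL_n$. Under this identification $Q_\chi$ coincides with the usual Green function, which is genuine: geometrically it is the graded total cohomology of a Springer fiber equipped with the Springer $W$-action, and the coefficients of its expansion in the irreducible characters are, up to the standard normalization, Kostka--Foulkes polynomials, which are well known to lie in $\bN[q]$. Hence $Q_\chi \in \cK(W)^+[q]$ with no input beyond classical Springer theory.

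The type $BC_n$ case is the crux. Here the limit-symbol Green functions are \emph{not} the ordinary Springer Green functions of $SO_{2n+1}$ or $Sp_{2n}$ in good characteristic, and those would not even be parametrized bijectively by $\Irr(W)$. Instead, as established in Section \ref{sec:typebc}, they agree with the geometrically defined Green functions attached to the exotic nilpotent cone, equivalently to the Springer theory of $\Lie Sp_{2n}(\Ftbar)$, for which the Springer correspondence is a bijection onto $\Irr(W)$. The strategy is therefore twofold: first, match the combinatorial output of the Lusztig--Shoji algorithm run with limit symbols (following \cite{sho04}) to these geometric Green functions by aligning the two parametrizations and checking that the resulting $P$-matrices coincide; second, deduce positivity from geometry, namely from the realization of the exotic Green functions as graded stalks of intersection cohomology complexes carrying a genuine $W$-action, whose transition polynomials lie in $\bN[q]$ by the usual purity argument. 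I expect step one to be the main obstacle, since it requires a careful dictionary between Shoji's limit-symbol combinatorics and the exotic-nilcone labelling; once the identification is in place, genuineness is immediate from the cohomological description.

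Finally, for types $H_3$ and $I_2(m)$ there is no ambient algebraic group, so I would proceed by direct computation. For $I_2(m)$ the character theory is small enough to write the defining data of the algorithm explicitly and solve the Lusztig--Shoji equations in closed form, exhibiting each coefficient as a manifestly nonnegative polynomial in $q$; when $m$ is even the two admissible linear orders must both be checked to yield the same genuine answer. For $H_3$, where $\Irr(W)$ has ten elements, the $P$-matrix can be computed outright with the aid of the computer packages cited in the introduction and each entry verified to lie in $\bN[q]$. Combining the four cases completes the proof.
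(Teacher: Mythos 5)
Your proposal is correct and takes essentially the same approach as the paper: the paper's proof is the same case-by-case reduction, handling type $A_{n-1}$ via classical Springer theory, type $BC_n$ by citing Achar--Henderson (\cite[Corollary 5.3]{ah08}, which is precisely your matching of limit-symbol combinatorics with the exotic nilcone plus positivity from geometry), type $H_3$ by direct calculation, and type $I_2(m)$ by citing \cite[Theorem 3]{aa08} rather than re-solving the algorithm explicitly. The only difference is that where you propose to carry out the identification and the dihedral computation yourself, the paper delegates these steps to the literature.
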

\begin{proof} For type $A_{n-1}$, $Q_\chi$ is equal to the Green function coming from the usual Springer theory (or Green polynomials), in which case the result is well-known. For type $BC_n$, it follows from \cite[Corollary 5.3]{ah08}. For type $H_3$, it follows from direct calculation. For type $I_2(m)$, it follows from \cite[Theorem 3]{aa08}.
\end{proof}

\subsection{$q$-Kreweras numbers attached to limit symbols}
Here we define the $q$-Kreweras numbers attached to limit symbols similarly to \cite[Section 1.3]{rs18}. To this end, first we introduce a virtual graded $W$-representation 
\[\cH_t = \bigoplus_{i=0}^{n} (-q^t)^i S^*(V) \otimes\wedge^iV \in \cK(W)\llbracket q \rrbracket\]
where $t$ is a positive integer, $n$ is the rank of $W$, $V$ is the reflection representation of $W$, and $S^*(V)=\bigoplus_{i \in \bN} S^i(V)q^i$ is its symmetric algebra as a graded representation of $W$. By Chevalley's result, we have $S^*(V) = Q_{Id}/\prod_{i=1}^n (1-q^{d_i})$ where $\{d_1, \ldots, d_n\}$ are the fundamental degrees of $W$. Thus we have $\cH_t \in \cK(W)_{\bQ(q)}$. Moreover, we have $\cH_t \in \cK(W)^+[q]$ for certain $t$ as the following theorem states.
\begin{thm} \label{thm:verygood} For $t \in \bZ_{>0}$, we say that $t$ is very good if it satisfies the following condition.
\begin{enumerate}
\item Type $A_{n-1}$: $\gcd(t, n)=1$.
\item Type $BC_{n}$: $t$ is odd.
\item Type $H_3$: $t\equiv 1, 5, 9 \pmod{10}$.
\item Type $I_2(m)$: $t\equiv \pm1 \pmod{m}$.
\end{enumerate}
Then, we have $\cH_t \in \cK(W)^+[q]$ if and only if $t$ is very good. In particular, $t$ is very good if $t-1$ is a multiple of the Coxeter number of $W$.
\end{thm}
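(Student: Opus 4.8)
The plan is to base everything on the single character identity
\[
\cH_t(w) \;=\; \frac{\det(1 - q^t w|_V)}{\det(1 - q w|_V)}, \qquad w \in W,
\]
which drops out of the definition once one records that the graded character of $S^*(V)$ at $w$ is $\det(1-qw|_V)^{-1}$ while that of $\bigoplus_i (-q^t)^i \wedge^i V$ is $\det(1 - q^t w|_V)$. Since $\cH_t$ a priori only lies in $\cK(W)\llbracket q\rrbracket$, deciding whether $\cH_t \in \cK(W)^+[q]$ is the question of whether, for every $\psi \in \Irr(W)$, the graded multiplicity
\[
\br{\cH_t, \psi} \;=\; \frac{1}{|W|}\sum_{w \in W} \overline{\psi(w)}\,\frac{\det(1 - q^t w|_V)}{\det(1 - q w|_V)}
\]
is a genuine element of $\bN[q]$ (in particular a polynomial). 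I would split the two implications and treat them quite differently.

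For the forward implication I would argue by contraposition using only the trivial representation $\mathbf{1}$. A Molien-series computation (the same one motivating the definition of $\textnormal{Cat}$ in the introduction) identifies its multiplicity with the $q$-Catalan number, $\br{\cH_t, \mathbf{1}} = \textnormal{Cat}(W, t; q) = \prod_i \bs{t-1+d_i}/\bs{d_i}$, so it suffices to show that $\prod_i (1 - q^{t-1+d_i})/(1 - q^{d_i})$ fails to be a polynomial whenever $t$ is not very good. Writing $1 - q^d = \pm \prod_{m \mid d}\Phi_m(q)$ for the cyclotomic factorisation, the multiplicity of $\Phi_m$ in the denominator is $\#\{i : m \mid d_i\}$ and in the numerator is $\#\{i : m \mid t-1+d_i\}$, and in each non-very-good case I would exhibit one index $m$ for which the denominator count strictly exceeds the numerator count. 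For type $A_{n-1}$ the choice $m = \gcd(t,n) > 1$ works; for type $BC_n$ the choice $m = 2$ works once $t$ is even (the denominator carries $\Phi_2^n$ while the numerator carries none); and for types $H_3$ and $I_2(m)$ the choice $m = h$, the Coxeter number, which here equals the top degree, pins down exactly the residues $t \equiv 1,5,9 \pmod{10}$ and $t \equiv \pm 1 \pmod m$. In every instance the offending $\Phi_m$ survives in the denominator after reduction, so $\textnormal{Cat}(W,t;q) \notin \bN[q]$ and hence $\cH_t \notin \cK(W)^+[q]$.

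The reverse implication, positivity of all the multiplicities $\br{\cH_t, \psi}$ when $t$ is very good, is where I expect the real work to lie, and it must be carried out type by type. The key conceptual point is that this does \emph{not} reduce to positivity of the expansion coefficients of $\cH_t$ in the positive basis $\{Q_\chi\}$: those coefficients are precisely the $q$-Kreweras numbers, which by Theorem \ref{thm:pos} can genuinely fail to lie in $\bN[q]$, so their combination must be shown positive by a separate argument. For type $A_{n-1}$ (with $\gcd(t,n)=1$) I would identify $\cH_t$, up to normalisation, with the graded $W$-character of the finite-dimensional module $L_{t/n}$ of the rational Cherednik algebra, equivalently the rational $(t,n)$ parking-function module, whose genuineness is part of rational Catalan theory. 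For type $BC_n$ with $t$ odd I would compute $\br{\cH_t, \chi^{(\mu,\nu)}}$ explicitly through the limit-symbol combinatorics recorded in Section \ref{sec:typebc} (together with the exotic-nilcone input of \cite{ah08}) and exhibit a manifestly nonnegative closed form. For $H_3$ and $I_2(m)$, where $V$ has dimension $3$ and $2$ and $\Irr(W)$ is small, I would simply run the finite average above for each $\psi$ and each relevant residue of $t$, with the computer assistance of \cite{chevie, lsalg, sagemath}.

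Finally the closing ``in particular'' is immediate and I would dispose of it last: if $t-1$ is a multiple of the Coxeter number $h$ then $t \equiv 1 \pmod h$, and in each type the residue $t \equiv 1$ already satisfies the very-good congruence --- $\gcd(1,n) = 1$ for $A_{n-1}$, $t$ odd for $BC_n$, $1 \in \{1,5,9\}$ for $H_3$, and $1 \equiv +1 \pmod m$ for $I_2(m)$. The principal obstacle is thus the reverse implication in types $A_{n-1}$ and $BC_n$, where one must establish genuine-representation positivity of the full virtual character $\cH_t$ rather than of its Green-function coordinates.
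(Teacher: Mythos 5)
Your overall architecture is sound, and it is necessarily a different route from the paper's, since the paper disposes of this theorem in one sentence by citing the proof of \cite[Proposition 13]{som11} and asserting that it extends to $H_3$ and $I_2(m)$. Your necessity half is complete and correct: by Solomon's formula \cite{sol63} one has $\br{\cH_t, \Id} = \textnormal{Cat}(W,t;q) = \prod_i \bs{t-1+d_i}/\bs{d_i}$, and your cyclotomic counts do exhibit a surviving denominator factor in every non-very-good case ($\Phi_{\gcd(t,n)}$ in type $A_{n-1}$, $\Phi_2$ in type $BC_n$ for $t$ even, $\Phi_h$ in types $H_3$ and $I_2(m)$), so $\cH_t \notin \cK(W)^+[q]$. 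The type $A_{n-1}$ sufficiency via the BGG resolution of the finite-dimensional Cherednik module $L_{t/n}$ is also a legitimate, known argument, and the closing ``in particular'' clause is indeed immediate.

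The gap is in your type $BC_n$ sufficiency step. You propose to compute $\br{\cH_t,\chi^{\mu,\nu}}$ ``through the limit-symbol combinatorics together with the exotic-nilcone input,'' but that means expanding $\cH_t = \sum_\chi \Krew(W,\chi,t;q)\,Q_\chi$ and recombining: the coefficients of that expansion are exactly the $q$-Kreweras numbers, which (as you yourself note via Theorem \ref{thm:pos}) have negative coefficients for $\chi \notin \im\Phi$, and the multiplicities $\br{Q_{\mu,\nu}, \chi^{\rho,\sigma}}$ are Kostka-type polynomials with no product formula; extracting a ``manifestly nonnegative closed form'' from this signed double sum is precisely the difficulty, not a routine computation. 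The tool that actually works bypasses the Green functions entirely: the Gutkin-type formula of \cite[Proposition 3.3]{gns99}, quoted verbatim in Section \ref{sec:typebc} of the paper, expresses $\sum_j y^j \br{S^*(V)\otimes \wedge^j V, \chi^{\mu,\nu}}$ as a hook-content product, and substituting $y=-q^t$ with $t$ odd gives
\[
\br{\cH_t, \chi^{\mu,\nu}} \;=\; q^{|\nu|}\, s_\mu\bigl(1,q^2,\ldots,q^{t-1}\bigr)\, s_\nu\bigl(1,q^2,\ldots,q^{t-3}\bigr),
\]
a product of principal specializations of Schur polynomials, manifestly in $\bN[q]$. (Equivalently and even faster: for $t$ odd the powers $x_1^t,\ldots,x_n^t$ span a $W$-stable copy of $V^*$ and form a homogeneous system of parameters, so the Koszul complex identifies $\cH_t$ with the graded character of the genuine module $\bC[V]/(x_1^t,\ldots,x_n^t)$; the equivariance of $x_i \mapsto x_i^t$ is exactly where oddness of $t$ enters, and the same trick in rotation eigencoordinates handles $I_2(m)$ for $t \equiv \pm 1 \pmod m$.) A smaller repair is needed for $H_3$ and $I_2(m)$: you cannot literally ``run the finite average for each relevant residue of $t$,'' since each residue class contains infinitely many $t$; one must first record that $\br{\cH_t,\psi}$ is a fixed rational expression in $q$ and the formal quantity $q^t$, and then verify positivity residue class by residue class.
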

\begin{proof} The proof of \cite[Proposition 13]{som11} still applies here (even when $W$ is of type $H_3$ or $I_2(m)$).
\end{proof}

Since $\{Q_\chi \mid \chi \in \Irr(W)\}$ is a basis of $\cK(W)_{\bQ(q)}$, $\cH_t$ can be uniquely written as a linear combination of $Q_\chi$. Let us define the $q$-Kreweras numbers, $\Krew(W, \chi, t;q) \in \bQ(q)$, to be such that the following equation holds:
\[\cH_t = \bigoplus_{\chi \in \Irr(W)} \Krew(W, \chi, t;q)Q_\chi.\]
We are especially interested in the case when $t$ is very good. The following theorem will be shown case-by-case in later sections.
\begin{thm}\label{thm:int} For $t$ very good, we have $\Krew(W, \chi, t;q) \in \bZ[q]$ for any $\chi \in \Irr(W)$.
\end{thm}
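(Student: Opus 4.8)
The plan is to prove Theorem \ref{thm:int} case-by-case, matching the structure of the rest of the paper, since both the total order $\leq_p$ and the explicit shape of the Green functions $Q_\chi$ are type-dependent. The core strategy is to exploit the triangularity of the $P$-matrix noted in the remark: because each block of the Lusztig--Shoji output consists of a single irreducible representation, the transition matrix $[\Lambda]$ expressing $\cH_t$ in the $Q_\chi$-basis is the inverse of a unitriangular matrix with respect to the order $\leq_p$, and so the coefficients $\Krew(W,\chi,t;q)$ can be computed by back-substitution. Concretely, write $\cH_t = \sum_\chi c_\chi(t;q)\,\chi$ in the standard basis $\Irr(W)$ and $Q_\chi = \sum_{\chi' \leq_p \chi} P_{\chi,\chi'}(q)\,\chi'$ with $P_{\chi,\chi} = 1$ (up to the normalization fixed by the $a$-function equalling the fake degree). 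Then solving $\cH_t = \sum_\chi \Krew(W,\chi,t;q) Q_\chi$ is a finite triangular system, and the first reduction is to show that each $c_\chi(t;q)$ and each $P_{\chi,\chi'}(q)$ lies in $\bZ[q]$ (or at worst in $\bZ[q,q^{-1}]$ cleared by the very-good hypothesis).

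The key computation is the coefficient vector of $\cH_t$. Using $\cH_t = \sum_{i=0}^n (-q^t)^i S^*(V)\otimes \wedge^i V$ together with Chevalley's formula $S^*(V) = Q_{Id}/\prod_{i=1}^n(1-q^{d_i})$, I would expand $c_\chi(t;q)$ in terms of fake degrees and the exterior-power multiplicities $\br{\chi, S^*(V)\otimes\wedge^i V}$. The numerator assembles into a product over the degrees $d_i$ of factors of the form $(1-q^{t}\,q^{\,(\text{exponent})})$, closely paralleling the product formulas for $\Nar(W,k,t;q)$ already recorded in the introduction. The role of the very-good condition is then exactly to guarantee that the denominator $\prod_i(1-q^{d_i})$ (and any residual cyclotomic factors introduced by the $q$-binomials in the Narayana expression) divides the numerator in $\bZ[q]$: in type $A_{n-1}$ this is the coprimality $\gcd(t,n)=1$ ensuring the relevant cyclotomic factors cancel; in type $BC_n$ it is $t$ odd; and in types $H_3, I_2(m)$ it is the stated congruences. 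This is precisely the same phenomenon underlying Theorem \ref{thm:verygood}, so I expect to reuse its divisibility bookkeeping.

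For type $A_{n-1}$, since the Green functions coincide with the classical Springer Green polynomials and the whole setup reduces to the Reiner--Sommers computation, I would cite the known integrality there and give at most a short verification. For type $BC_n$, the argument goes through the exotic nilcone / $\Lie Sp_{2n}(\Ftbar)$ realization of the limit-symbol Green functions; here I would use the explicit combinatorial formulas (via the quantities $l(\mu,\nu)$, $m_{\mu,\nu}(r)$, $L(\mu,\nu)$, $z(\mu,\nu)$, $d(\mu,\nu)$ defined in Section \ref{sec:defnot}) to write the $P$-matrix entries and the $\cH_t$-coefficients, then run the triangular inversion and check that all denominators clear under $t$ odd. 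For types $H_3$ and $I_2(m)$, the representation count is small enough that I would verify integrality by direct computation, leaning on the computer-algebra packages cited in the introduction; for $I_2(m)$ the family is uniform in $m$ so a single closed-form check suffices.

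The main obstacle I anticipate is the type $BC_n$ case. There the honest labor is to produce the closed form of $\Krew(W,\chi,t;q)$ from the limit-symbol $P$-matrix and then prove that the cyclotomic denominators all cancel for every very-good (odd) $t$ simultaneously. Unlike the uniform product formula for $\Nar$, the individual Kreweras numbers carry the finer combinatorial data of $(\mu,\nu)$, so the cancellation is not visibly term-by-term and must be organized via the structure of the $q$-multinomial coefficients. I would isolate the denominator as a product of factors $(1-q^{\text{even}})$ and show each is matched by a numerator factor using that $t$ is odd; getting this cancellation to hold across the whole triangular back-substitution, rather than just for the leading coefficient, is the delicate point.
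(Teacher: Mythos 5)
Your skeleton (case-by-case; type $A_{n-1}$ quoted from Reiner--Sommers; types $H_3$ and $I_2(m)$ by direct/computer calculation, uniform in $m$) matches the paper, but in type $BC_n$ --- the only case with real content --- your core mechanism is not the one the paper uses, and as described it has a gap. The paper never inverts the $P$-matrix. Instead it invokes Shoji's orthogonality formula for Green functions,
\[
q^{n^2}\,Q_{Id}\otimes \wedge^{j}V \;=\; \sum_{(\mu,\nu)\in\fP_{n,2}}\bigl|\cO_{\mu,\nu}(\bF_q)\bigr|\,\br{Q_{\mu,\nu},\wedge^{n-j}V}\,Q_{\mu,\nu},
\]
which, combined with Chevalley's formula, expands $\cH_t$ \emph{directly} in the $Q$-basis; the resulting coefficient of $Q_{\mu,\nu}$ is then evaluated in closed form using two further inputs that never appear in your outline: the product formula $\sum_{i}\br{Q_{\mu,\nu},\wedge^{n-i}V}\,y^i = y^{n-l(\mu,\nu)}\prod_{j=1}^{l(\mu,\nu)}(y+q^{2j-1})$ of Theorem \ref{thm:bcref} (the analogue of Sommers' theorem, itself proved by induction on orbit closures using the same orthogonality relation and the Gyoja--Nishiyama--Shoji hook-content formula), and Sun's closed formula for the point count $|\cO_{\mu,\nu}(\bF_q)|$. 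Substituting $y=-q^t$ and Sun's formula makes the Chevalley denominator $\prod_{i=1}^n(1-q^{2i})$ cancel manifestly, yielding the closed formula of Theorem \ref{thm:bckre}, from which membership in $\bZ[q]$ for odd $t$ (and the exact positivity criterion $L(\mu,\nu)=0$) is read off.

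Your plan --- triangular back-substitution plus ``organizing the $q$-multinomial structure'' to clear denominators --- defers exactly the hard part, and I see two concrete obstructions. First, the coefficients $c_\chi(t;q)=\br{\cH_t,\chi}$ are not polynomials a priori but rational functions whose denominators are hook-length products (not merely $\prod_i(1-q^{d_i})$), while the entries of the inverse $P$-matrix are inverse-Kostka-like alternating sums; the needed cancellation occurs only in the aggregate, not term by term, and you supply no mechanism for it --- the orthogonality formula \emph{is} that mechanism. Second, even the cleanest version of your argument (use Theorem \ref{thm:verygood} to get $c_\chi\in\bN[q]$ for very good $t$, then invert the triangular $P$-matrix) falls short: the diagonal of $P$ consists of the monomials $q^{b(\chi)}$, not $1$, so inversion only gives $\Krew(W,\chi,t;q)\in\bZ[q,q^{-1}]$, and nothing in your outline excludes negative powers of $q$. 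So the proposal is genuinely incomplete precisely in the type $BC_n$ case; the missing ideas are the orthogonality relation, the generating-function identity for $\br{Q_{\mu,\nu},\wedge^i V}$, and the orbit point counts.
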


\subsection{Positivity of $q$-Kreweras numbers} \label{sec:pos}
In general, even if $\Krew(W, \chi, t;q) \in \bZ[q]$, it does not always hold that $\Krew(W, \chi, t;q) \in \bN[q]$. Let us state the sufficient and necessary condition for the positivity of the $q$-Kreweras numbers. To this end, we define a map $\Phi$ from the set of (the conjugacy classes of) parabolic subgroups in $W$ to $\Irr(W)$. 

If $W$ is of type $A_{n-1}$, then a parabolic subgroup is of the form $\sym_{a_1} \times \cdots\times \sym_{a_k}$ for some $a_1, \ldots, a_k \in \bZ_{>0}$ such that $a_1+\cdots+a_k = n$. (Here, $\sym_a$ is the symmetric group permuting $a$ elements.) Let $\lambda$ be the partition of $n$ obtained by rearranging $a_1, \ldots, a_k$ if necessarly. Then we set $\Phi(P) = \chi^\lambda$. Note that this is also equivalent to the ``principal-in-a-Levi'' condition of \cite{rs18}. Indeed, if we let $\cO_\lambda \subset \Lie GL_n(\bC)$ be the nilpotent orbit which contains a regular nilpotent element in the Levi subalgebra corresponding to $P \subset W$, then the (usual) Springer correspondence sends $\cO_\lambda$ to $\chi^\lambda$.

If $W$ is of type $BC_n$, then any parabolic subgroup $P\subset W$ is isomorphic to $H_{b}\times \sym_{a_1} \times \cdots \times \sym_{a_k}$ where $H_{b}$ is a Coxeter group of type $BC_{b}$ and $b+a_1+\cdots+a_k = n$. (Here $b$ can be zero.) In this case, without loss of generality we may assume that $a_1\geq \cdots \geq a_m > b \geq a_{m+1} \geq \cdots \geq a_k$ for some $0\leq m\leq k$. We set
\[ \mu=(b, b, \ldots, b, a_{m+1}, \ldots, a_k)\quad  \textnormal{ and }\quad \nu=(a_1-b, a_2-b, \ldots, a_m-b)\]
where there are $m+1$ $b$'s in $\mu$. Now we set $\Phi(P) = \chi^{(\mu,\nu)}$.

\begin{rmk} This correspondence is different from the usual Springer correspondence of either $SO_{2n+1}(\bC)$ or $Sp_{2n}(\bC)$ that is used in \cite{rs18}. Rather, here we exploit the Springer correspondence of $\Lie Sp_{2n}$ over characteristic 2. This is explained in Section \ref{sec:typebc} in more detail.
\end{rmk}

If $W$ is of type $H_3$, then we are no longer able to argue using the Levi subalgebra of some Lie algebra. Instead, for each $P \subset W$ there exists a unique $\chi\in \Irr(W)$ which appears in $\Ind_P^W \Id_P$ with the highest fake degree. (For such a representation we have $\br{\Ind_P^W \Id_P,\chi}=1$.) Then we set $\Phi(P) = \chi$. We list the types of parabolic subgroups of $W$ and their images under $\Phi$:
\begin{align*}
&\{id\} \mapsto \phi_{1,15}, &&A_1 \mapsto \phi_{3,8}, &&A_1\times A_1 \mapsto \phi_{5,5}, 
\\&A_2 \mapsto \phi_{4,4}, &&I_2(5) \mapsto \phi_{3,3}, &&H_3 \mapsto \phi_{1,0}.
\end{align*}
Note that there are three different parabolic subgroups of type $A_1$; all of them are mapped to the same representation $\phi_{3,8}$.

When $W$ is of type $I_{2}(m)$ we define $\Phi$ similarly to above. If $m$ is odd then we have:
\[\{id\} \mapsto \phi_{1,m}, \quad A_1 \mapsto \phi_{2,(m-1)/2}, \quad I_2(m) \mapsto \phi_{1,0}.\]
If $m$ is even then we have:
\[\{id\} \mapsto \phi_{1,m}, \quad A_1' \mapsto \phi_{1,m/2}', \quad A_1'' \mapsto \phi_{1,m/2}'', \quad I_2(m) \mapsto \phi_{1,0}.\]
Here $A_1'$ and $A_1''$ indicate two different parabolic subgroups, respectively, generated by each simple reflection of $I_2(m)$.

Now we state the positivity theorem of $q$-Kreweras numbers. Its proof is given in later sections case-by-case.
\begin{thm}[cf. {\cite[Theorem 1.6]{rs18}}] \label{thm:pos} For $\chi \in \Irr(W)$ and very good $t$, the corresponding $q$-Kreweras number $\Krew(W, \chi, t;q) \in \bZ[q]$ has nonnegative integer coefficients if and only if $\chi \in \im \Phi$.
\end{thm}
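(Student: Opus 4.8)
The plan is to prove Theorem \ref{thm:pos} case-by-case, matching the overall structure of the paper. In each type I would first compute (or invoke the closed formula mentioned in the introduction for) $\Krew(W, \chi, t;q)$ explicitly, then determine the image $\im \Phi$ from the definitions of $\Phi$ given in Section \ref{sec:pos}, and finally check the equivalence ``positivity $\iff \chi \in \im\Phi$'' directly against these two descriptions. Since each direction of the biconditional is a statement about finitely many combinatorial families (or, for $H_3$, finitely many representations outright), the proof reduces to a positivity analysis of the explicit formulas. The two directions are genuinely different in character: showing $\chi \in \im\Phi \Rightarrow \Krew \in \bN[q]$ requires exhibiting positivity, while $\chi \notin \im\Phi \Rightarrow \Krew \notin \bN[q]$ requires producing a negative coefficient, so I would handle them separately.

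For type $A_{n-1}$, since the Green functions coincide with the usual Springer theory, the result should follow from the corresponding statement in \cite{rs18}: the image of $\Phi$ consists exactly of those $\chi^\lambda$ for which $\cO_\lambda$ is principal-in-a-Levi, and Reiner--Sommers already established the positivity dichotomy in this setting. Thus here I would simply transcribe their argument and verify that my $\Phi$ agrees with their principal-in-a-Levi correspondence, which the excerpt already asserts. For type $BC_n$, I would use the explicit closed formula and the auxiliary statistics $l(\mu,\nu)$, $m_{\mu,\nu}(r)$, $L(\mu,\nu)$, $z(\mu,\nu)$, $d(\mu,\nu)$ introduced in Section \ref{sec:defnot}; the key is to characterize combinatorially when $(\mu,\nu)$ lies in $\im\Phi$ (namely when $(\mu,\nu)$ arises from the prescribed $(b;a_1,\dots,a_k)$ data) and to show that the closed formula for $\Krew(W, \chi^{(\mu,\nu)}, t;q)$ has nonnegative coefficients precisely in that case. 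I expect the positivity (``if'') direction to factor the formula into a product of $q$-binomial-type factors times a monomial, each manifestly in $\bN[q]$; the nonpositivity (``only if'') direction will require identifying, for each $(\mu,\nu) \notin \im\Phi$, a specific coefficient that is negative, likely by reducing to small cases or extremal choices of $t$.

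For types $H_3$ and $I_2(m)$, the argument is mostly direct computation. In type $H_3$ there are only ten irreducible representations and six parabolic classes, so $\im\Phi = \{\phi_{1,0},\phi_{3,3},\phi_{4,4},\phi_{5,5},\phi_{3,8},\phi_{1,15}\}$; I would compute each $\Krew(W, \phi_{d,e}, t;q)$ for very good $t$ (using the recorded Green functions from \cite{chevie} or \cite{lsalg}) and simply read off positivity, confirming it holds exactly on the six listed representations and fails on $\phi_{3,1},\phi_{5,2},\phi_{4,3},\phi_{3,6}$. Type $I_2(m)$ is similar but parametrized by $m$, so rather than a finite check I would need the explicit $\Krew$ formulas in terms of $m$, $t$, and the representation, then verify the dichotomy as a family; the two-dimensional representations $\phi_{2,r}$ with $r \neq (m-1)/2$ (odd $m$) should be exactly those excluded from $\im\Phi$, and I would show their $q$-Kreweras numbers carry a negative coefficient.

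The hard part will be the nonpositivity direction in type $BC_n$: proving that \emph{every} bipartition outside $\im\Phi$ yields a genuinely negative coefficient, uniformly over all very good $t$, rather than merely failing to be obviously positive. Exhibiting positivity is a matter of clever factoring, but certifying negativity in general may require a structural argument — for instance, showing that the leading or a distinguished coefficient of $\Krew(W, \chi^{(\mu,\nu)}, t;q)$ changes sign exactly when the parabolic condition is violated — and getting this to hold for all such $(\mu,\nu)$ simultaneously, as opposed to a case-by-case inspection of small ranks, is where the real work lies.
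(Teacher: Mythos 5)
Your plan is essentially the paper's proof: type $A_{n-1}$ is imported from Reiner--Sommers, types $H_3$ and $I_2(m)$ are handled by explicit tables of $\Krew(W,\chi,t;q)$ (where the non-image representations visibly carry a factor $(q-1)$ or $(q^{t-1}-1)$), and type $BC_n$ goes through a closed formula plus a combinatorial identification of $\im\Phi$. The one point you misjudge is where the work lies in type $BC_n$: the ``only if'' direction you flag as hard is automatic from the paper's closed formula (Theorem \ref{thm:bckre}), which exhibits $\Krew(W,\chi^{\mu,\nu},t;q)$ as a monomial times $\prod_{j=1}^{L(\mu,\nu)}(q^{t-2j+1}-1)$ times a $q$-multinomial, so positivity holds precisely when $L(\mu,\nu)=0$; the genuine content is instead proving that $L(\mu,\nu)=0$ is equivalent to the parabolic condition defining $\im\Phi$, which the paper does (Theorem \ref{thm:bcpos}) via critical values and the Springer correspondence of $\Lie Sp_{2n}$ in characteristic $2$.
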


\subsection{Relation with $q$-Narayana numbers}
Recently Reiner-Shepler-Sommers \cite{rss20} defined the $q$-Narayana numbers for any complex coincidental reflection group. Here we recall their definition. Suppose that the exponents of $W$ are $e, e+a, \ldots, e+(n-1)a$ for some $e, a \in \bN$ where $n$ is the rank of $W$. Then the $q$-Narayana number with parameters $k$ and $t$ is defined to be
\[\Nar(W, k, t;q) = q^{(t-ak-1)(n-k)} \qbinom{n}{k}_{q^a} \frac{\prod_{i=0}^{k-1}(1-q^{t-1-ai})}{\prod_{i=0}^{k-1}(1-q^{e+1+ai})}.\]
This definition also coincides with the ones given in \cite{rs18} for type $A_{n-1}$ and $BC_n$.

 In the following sections, we prove the following theorem case-by-case.
\begin{thm}[cf. {\cite[Definition 1.9]{rs18}}] \label{thm:nar}For a coincidental Coxeter group $W$ we have
\[\Nar(W,k,t;q) = \sum_{\br{Q_\chi,V}_{q=1}=k}\Krew(W,\chi, t;q).\]
Here, $\br{Q_\chi,V}_{q=1}$ is the (ungraded) multiplicity of the reflection representation $V$ of $W$ in $Q_\chi$.
\end{thm}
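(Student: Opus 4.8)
The plan is to prove the identity one level at a time. Since $\Krew(W,\chi,t;q)$ is by definition the coefficient of $Q_\chi$ in the expansion of $\cH_t$, it suffices to partition $\Irr(W)$ according to the value of the level function $f(\chi)=\br{Q_\chi,V}_{q=1}$ and to verify, for each $k$, the single equality $\sum_{f(\chi)=k}\Krew(W,\chi,t;q)=\Nar(W,k,t;q)$. As the computation of $\Krew$ is carried out type by type and each section yields a closed product formula for it, I would prove Theorem \ref{thm:nar} in the same fashion: in each type, (i) make $f$ completely explicit in terms of the combinatorial label of $\chi$, (ii) insert the closed formula for $\Krew(W,\chi,t;q)$, and (iii) sum over each fibre $f^{-1}(k)$ and match the explicit product defining $\Nar(W,k,t;q)$. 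Throughout I would use the Remark that the $a$-function of $\chi$ equals its fake degree, which controls the lowest-degree term of each $Q_\chi$ and hence makes the $q=1$ multiplicities accessible.

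In type $A_{n-1}$ the Green function $Q_{\chi^\lambda}$ is the ordinary Springer/Green polynomial, whose specialization at $q=1$ decomposes with multiplicities given by Kostka numbers; since $V=\chi^{(n-1,1)}$, a one-line tableau count gives $\br{Q_{\chi^\lambda},V}_{q=1}=\lambda_1-1$ (consistent with the value $n-1$ on the trivial representation, whose Green function is the coinvariant algebra, and $0$ on the sign representation). The type-$A$ case then reduces to showing that the closed formula for $\Krew(W,\chi^\lambda,t;q)$, summed over all $\lambda\vdash n$ with a prescribed largest part, collapses to $\Nar(W,k,t;q)$; this is a $q$-refinement of the classical statement that Kreweras numbers sum to Narayana numbers, and I expect it to follow from a $q$-Vandermonde/$q$-binomial manipulation. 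For types $H_3$ and $I_2(m)$ the set $\Irr(W)$ is small: for $H_3$ I would tabulate $f(\phi_{d,e})$ on all ten irreducibles from the explicit $P$-matrix, group them, and check the finitely many resulting identities; for $I_2(m)$ I would tabulate $f$ across the family $\{\phi_{2,r}\}$ and the few one-dimensional representations, which reduces the only nontrivial level to a single summation over $r$ whose dependence on the parameter $m$ is mild.

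The substantial case is type $BC_n$. Here the Green functions $Q_\chi$ are those attached to limit symbols, which by the discussion of Section \ref{sec:typebc} coincide with the Green functions of $\Lie Sp_{2n}$ in characteristic $2$ (equivalently those of the exotic nilcone). The first task is to compute $f(\chi^{(\mu,\nu)})=\br{Q_{\chi^{(\mu,\nu)}},V}_{q=1}$ through this characteristic-$2$/exotic Springer correspondence and to express it in terms of the bipartition statistics $l(\mu,\nu)$, $m_{\mu,\nu}$, and $L(\mu,\nu)$ introduced in Section \ref{sec:defnot}; the anchor values $\br{Q,V}_{q=1}=n$ on the trivial representation and $0$ on the sign representation fix the normalization. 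The second and harder task is the summation itself: one must sum the closed formula for $\Krew(W,\chi^{(\mu,\nu)},t;q)$, whose shape involves $z(\mu,\nu)$, $d(\mu,\nu)$, and the factors $m_{\mu,\nu}(r)$, over all bipartitions of a fixed level and show that it telescopes to $\Nar(W,k,t;q)$. This $q$-series identity must simultaneously reconcile the doubling phenomenon — the parameter $q^2$ entering through the brackets $\dbs{m}$ — with the odd-exponent arithmetic progression of $BC_n$, and it is here that I expect the real work to lie. I would organize it by first establishing the ungraded ($q=1$) identity as an enumeration of type-$B$ non-crossing partitions of fixed rank, and then upgrading to the graded statement by tracking the powers of $q$ through the statistics $z(\mu,\nu)$ and $d(\mu,\nu)$.
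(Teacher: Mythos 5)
Your overall skeleton --- partition $\Irr(W)$ by the level $f(\chi)=\br{Q_\chi,V}_{q=1}$, insert the closed formulas for $\Krew$, and verify the identity fibre by fibre, type by type --- is the same as the paper's, but two of your concrete steps are respectively wrong and missing.

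First, your level function is inverted. In the paper's convention the coinvariant algebra is the Green function attached to the \emph{sign} representation (the zero orbit's Springer fiber is the full flag variety, and its top cohomology, which is the ``leading term'' fixing the correspondence $\cO_\lambda\mapsto\chi^\lambda$, is the sign character); the trivial representation corresponds to the regular orbit, whose Springer fiber is a point. So the anchor values are $f(\mathrm{sign})=\textnormal{rank}$ and $f(\mathrm{trivial})=0$, exactly opposite to what you assert in both type $A_{n-1}$ and type $BC_n$ (compare the $H_3$ table: $\br{Q_{\phi_{1,0}},V}=0$, $\br{Q_{\phi_{1,15}},V}=q^9+q^5+q$). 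Consequently in type $A_{n-1}$ one has $Q_{\chi^\lambda}|_{q=1}=\Ind_{\sym_\lambda}^{\sym_n}\Id$ and $\br{Q_{\chi^\lambda},V}_{q=1}=K_{(n-1,1),\lambda}=l(\lambda)-1$, the number of parts minus one, not $\lambda_1-1$. This is not a harmless convention choice: with the paper's formula for $\Krew(W,\chi^\lambda,t;q)$, summing over $\{\lambda:\lambda_1=k+1\}$ does not give $\Nar(W,k,t;q)$. Already for $n=4$, $t=5$, $q=1$ the sums over fixed largest part are $(1,8,4,1)$ as $k$ runs from $0$ to $3$, while the Narayana numbers are $(1,6,6,1)$. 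With the corrected level $l(\lambda)-1$, the type $A$ case is precisely the Reiner--Sommers identity, which the paper simply cites; and in type $BC_n$ the correct level is $l(\mu,\nu)$, which is Lemma \ref{lem:bcref} (proved via Spaltenstein's computation transported through the bijection $\iota$), not something you can normalize from your (inverted) anchors.

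Second, in type $BC_n$ --- which you rightly identify as the substantial case --- you offer no workable mechanism for the summation. Your plan to ``first establish the ungraded $q=1$ identity \ldots and then upgrade to the graded statement by tracking the powers of $q$'' is not a valid inference: a polynomial identity is strictly stronger than its specialization at $q=1$, and nothing in your outline produces the graded statement from the ungraded one. The paper's actual route avoids direct $q$-series manipulation altogether: it rewrites $\Krew(W,\chi^{\mu,\nu},t;q)$ so that, once the level $l(\mu,\nu)=k$ is fixed, the only $(\mu,\nu)$-dependent factor is the point count $|\cO_{\mu,\nu}(\bF_q)|$, thereby reducing Theorem \ref{thm:nar} to
\[ \sum_{(\mu,\nu) \in \fP_{n,2},\, l(\mu,\nu)=k} |\cO_{\mu,\nu}(\bF_q)| \;=\; \sum_{\lambda \in \fP_{2n}^C,\, \floor{l(\lambda)/2}=k}|\cO^{Sp}_{\lambda}(\Fq)|, \]
whose right-hand side is already evaluated in \cite[Lemma 5.1]{rs18}. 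That equality is then proved geometrically, using Achar--Henderson--Sommers: the collapsing map $(\cdot)^C$ on $\fP_{n,2}$, the map $\Phi^C$ to $\fP_{2n}^C$, and \cite[Theorem 2.23(2)]{ahs11}, which says the exotic orbit counts fibre over the characteristic-$2$ symplectic orbit counts compatibly with the level. Without this input (or an honest direct proof of the corresponding $q$-multinomial identity), your type $BC_n$ case remains an unproved claim; the $H_3$ and $I_2(m)$ cases, by contrast, are finite checks and your tabulation plan there matches the paper's.
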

\begin{rmk} In \cite{rs18}, the $q$-Narayana numbers are defined to be the sum of $q$-Kreweras numbers (with respect to the usual Springer theory) similar to the above theorem and the definition in \cite{rss20} is a theorem therein. 
\end{rmk}

\subsection{Cyclic sieving} \label{sec:cycsi}
In \cite{rs18} it was conjectured, and proved for classical types (with respect to the usual Springer theory), that the $q$-Kreweras numbers exhibit certain cyclic sieving phenomena. To this end, first we introduce the notion of (chains of) non-crossing partitions for general Coxeter groups. For a Coxeter system $(W, S=\{s_1, \ldots, s_n\})$, we fix a (standard) Coxeter element, say $c=s_1s_2\cdots s_n$. Let $\Rx(W)$ be the set of all reflections in $W$. (This set is in general strictly larger than $S$.) Define the absolute length of $w \in W$, say $l^a(w)$, to be the minimum number of reflections whose product is $w$. We define an order on $W$ to be the closure of the cover relations $w\leq_a wr$ where $r\in \Rx(W)$ and $l^a(wr)=l^a(w)+1$. Now we define
\[\NC^{(s)}(W) = \{(w_1, \ldots, w_s) \in W \mid w_1 \leq_a \cdots \leq_a w_s \leq_a c\}.\]
This set depends on the choice of $c$ but they are all equivalent under conjugation.

\begin{rmk}
When $W$ is a symmetric group permuting $n$ elements, $\NC^{(1)}(W)$ is equivalent to non-crossing partitions of $n$ elements originally introduced by Kreweras \cite{kre72}. Later it is vastly generalized and the definition above is adopted from \cite{arm09}, to which we refer readers for more details.
\end{rmk}

With $(w_1, \ldots, w_s) \in \NC^{(s)}(W)$ we associate a sequence
\[(\delta_1, \ldots, \delta_{s-1}, \delta_s) = (w_1^{-1}w_2, \ldots, w_{s-1}^{-1}w_s, w_{s}^{-1}c)\]
called a $\delta$-sequence in \cite{arm09}. Note that such a $\delta$-sequence uniquely determines $(w_1, \ldots, w_s)$. Now we define a $\bZ/sh$-action on $\NC^{(s)}(W)$ (where $h$ is the Coxeter number of $W$) so that in terms of a $\delta$-sequence it is described as
\[(\delta_1, \ldots, \delta_{s-1}, \delta_s)  \mapsto (c\delta_s c^{-1}, \delta_1, \ldots, \delta_{s-1}). \]
This action is indeed well-defined and clearly a $\bZ/sh$-action. (See \cite[3.4]{arm09} for detailed discussion.) 

Recall that when $t$ is very good, $\Krew(W, \chi,t;q) \in \bN[q]$ if and only if $\chi \in \im \Phi$. For a parabolic subgroup $P \subset W$, let $V^P$ be the set of point-wise fixed points in the reflection representation $V$ by elements in $P$, and let $W\cdot V^P = \{w\cdot V^P \mid w \in W\} = \{V^{wPw^{-1}}\mid w \in W\}$. The following theorem is shown case-by-case in later sections.
\begin{thm}[cf. {\cite[Conjecure 1.4]{rs18}}] \label{thm:cyc} Let $P$ be a parabolic subgroup of $W$. For $s,d\in \bN$ such that $d \mid sh+1$ where $h$ is the Coxeter number of $W$, the specialization $\Krew(W, \Phi(P), sh+1;\omega_d)$ is equal to the number of elements in 
\[\{(w_1, \ldots, w_s) \in \NC^{(s)}(W) \mid V^{w_1} \in W\cdot V^P \}\]
fixed by the order $d$ element in $\bZ/sh$ with respect to the cyclic action described above. Here, $\omega_d \in \bC^\times$ is a primitive $d$-th root of unity.
\end{thm}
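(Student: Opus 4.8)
The plan is to read the statement as an instance of the cyclic sieving phenomenon of Reiner–Stanton–White and to verify its defining identity case-by-case, paralleling \cite{rs18} but with the limit-symbol Kreweras numbers. First I would record the two facts that make the assertion well-posed. Since $t=sh+1$ satisfies $t-1=sh\equiv 0\pmod h$, Theorem \ref{thm:verygood} shows $t$ is very good, so by Theorems \ref{thm:int} and \ref{thm:pos} the polynomial $\Krew(W,\Phi(P),sh+1;q)$ lies in $\bN[q]$; this is what lets its root-of-unity evaluations be read as nonnegative fixed-point counts. Second, I would check that the subset $X_P=\{(w_1,\ldots,w_s)\in\NC^{(s)}(W)\mid V^{w_1}\in W\cdot V^P\}$ is stable under the cyclic action: a short computation with $\delta$-sequences shows that one step of the rotation replaces $w_1$ by the $W$-conjugate $c\,w_s^{-1}w_1w_s\,c^{-1}$, hence sends $V^{w_1}$ into $W\cdot V^{w_1}$, so the condition $V^{w_1}\in W\cdot V^P$ is preserved. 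With these in hand the task reduces to matching, for each admissible $\omega_d$, the value $\Krew(W,\Phi(P),sh+1;\omega_d)$ against the number of elements of $X_P$ fixed by the corresponding order-$d$ element of the rotation.

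For type $A_{n-1}$ there is nothing new: the limit-symbol Green functions coincide with the ordinary Springer Green functions, so $\Krew(W,\chi^\lambda,t;q)$ agrees with the Reiner–Sommers Kreweras number and their proof of the type $A$ case of Conjecture 1.4 applies verbatim. For type $BC_n$ the combinatorial side — the set $X_P$, the rotation, and the fixed-point counts — depends only on the Coxeter group $W$ and is therefore identical to the one in \cite{rs18}; what changes is the polynomial. Here I would take the closed formula for $\Krew(W,\chi^{(\mu,\nu)},sh+1;q)$ obtained in Section \ref{sec:typebc} from the exotic/characteristic-$2$ Springer theory and evaluate it at $\omega_d$ using the standard $q$-Lucas reductions for the $q$-binomial and $q$-factorial factors appearing in it. That evaluation is then compared with a direct count of rotation-fixed type-$BC$ noncrossing partitions of the prescribed block type; since the latter count is governed by the same exotic combinatorics, one expects the single limit-symbol series to reproduce exactly the fixed-point numbers that in \cite{rs18} required two separate symplectic/orthogonal series.

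For types $H_3$ and $I_2(m)$, where no Springer theory is available, the argument is by explicit computation. For $I_2(m)$ the absolute order poset has rank $2$, so $\NC^{(s)}(W)$, the rotation, and its orbit structure are completely transparent; one computes both sides as explicit functions of $s$ and $d$ (the dihedral combinatorics being elementary) and checks equality against the Kreweras polynomials of Section \ref{sec:i2m}. For $H_3$ the group is finite of rank $3$: I would enumerate $\NC^{(s)}(H_3)$ and the cyclic orbits of its rotation with \cite{chevie} and \cite{sagemath}, then compare the fixed-point counts to the evaluations of the explicitly computed polynomials of Section \ref{sec:h3}. Since $s$ ranges over all of $\bN$ the check cannot be a finite spot-check, so I would first reduce the fixed-point count of a given rotation element to finitely many residue classes in $s$ and match this periodicity with that of the $\omega_d$-evaluations, so that a finite computation settles every $s$ at once.

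The step I expect to be hardest is type $BC_n$. Unlike type $A$, I cannot simply invoke \cite{rs18}, because the limit-symbol Kreweras numbers are genuinely different polynomials from the two series there; the crux is to show that their $\omega_d$-evaluations nonetheless equal the rotation fixed-point counts on type-$BC$ noncrossing partitions. This demands both a clean cyclotomic evaluation of the closed formula and careful bookkeeping of how the block-type condition $V^{w_1}\in W\cdot V^P$ interacts with the rotation, and it is precisely where the \emph{uniformity} advertised in the introduction has to be made rigorous. A secondary difficulty is guaranteeing that the $H_3$ verification is genuinely uniform in $s$ rather than a finite set of checks.
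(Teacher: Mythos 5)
Your overall architecture coincides with the paper's: the proof is case-by-case; type $A_{n-1}$ is quoted verbatim from \cite{rs18}; for type $BC_n$ one observes that the combinatorial side (the set $X_P$ in your notation, the rotation, and its fixed-point counts, already computed in \cite[Section 6]{rs18} for the nilpotent orbit of $\Lie Sp_{2n}(\bC)$ attached to $P$) is unchanged, so the entire task is to evaluate the closed formula of Theorem \ref{thm:bckre} at $\omega_d$ via the $q$-Lucas lemma (\cite[Lemma 6.3]{rs18}, restated as Lemma \ref{lem:calqbin}) and to verify that the monomial prefactor specializes to $1$; and types $H_3$, $I_2(m)$ are handled by explicit computation. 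Your preliminary observations --- that $t=sh+1$ is very good, that positivity holds for $\chi\in\im\Phi$, and that $X_P$ is stable under rotation because one step replaces $w_1$ by the conjugate $cw_s^{-1}w_1w_sc^{-1}$ --- are correct and consistent with (indeed slightly more explicit than) the paper.

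The one genuine gap is your $H_3$ reduction. You propose to reduce the fixed-point count ``to finitely many residue classes in $s$'' and ``match this periodicity'' with the $\omega_d$-evaluations. Neither side is periodic in $s$: for $d\in\{1,2\}$ the number of fixed chains with $w_1=id$ is $s(5s-2)(5s-4)/3$, and for $d\in\{3,6\}$ with $3\mid s$ it is $5s/3$; both grow with $s$, so there is no periodicity to match, and a finite machine enumeration of $\NC^{(s)}(H_3)$ (which is only possible for finitely many $s$) cannot settle all $s$ without an a priori structural bound. What the paper does instead, and what your plan is missing, is a uniform-in-$s$ count via $\delta$-sequences: $\NC^{(1)}(H_3)$ has $32$ elements falling into a few $c$-conjugacy classes; for each class of $w_1$ the list of possible reduced sequences $\Delta$ (the non-identity entries of the $\delta$-sequence, of total absolute length at most $3$) is finite and explicit; and a rotation-fixed chain is determined by such a $\Delta$ together with an admissible placement of its entries among the $s$ (or $s/d$) slots, which produces closed-form polynomial or quasi-polynomial expressions in $s$ valid for every $s$ simultaneously. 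Your strategy can be repaired by first proving that both sides are quasi-polynomials in $s$ of degree at most $3$ on each relevant residue class, after which finitely many evaluations suffice; but establishing that quasi-polynomiality for the fixed-point count is exactly the $\delta$-sequence analysis above, not something \cite{chevie} or \cite{sagemath} will produce on its own.
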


\section{Type $A_{n-1}$}\label{sec:typeA}
In type $A$, the Green functions attached to limit symbols are the same as the usual Green functions (or Green polynomials). Thus, in this case everything is already covered by the results of Reiner and Sommers  \cite{rs18}. Here we review their work for the sake of readers' convenience.

\subsection{$q$-Kreweras numbers and positivity}
Suppose that $W$ is of type $A_{n-1}$. Then for $t \in \bZ_{>0}$ and $\lambda \vdash n$, we have
\[\Krew(W, \chi^\lambda,t;q)=q^{t(n-l(\lambda))-c(\lambda)}\frac{1}{[t]}\qbinom{t}{t-l(\lambda),m_\lambda(\bZ_{>0})}\]
where $\qbinom{t}{t-l(\lambda),m_\lambda(\bZ_{>0})}=\qbinom{t}{t-l(\lambda),m_\lambda(1),m_\lambda(2),\ldots}.$
Here, $c(\lambda) = \sum_{i\geq 1}\lambda^T_i \lambda^T_{i+1}$ where $\lambda^T$ is the conjugate partition of $\lambda$. In particular, if $\gcd(t,n) = 1$ then we have $\Krew(W, \chi^\lambda,t;q) \in \bN[q]$. (Note that in this case $\im \Phi=\Irr(W)$.)

\subsection{Relation with $q$-Narayana numbers}
When $W$ is of type $A_{n-1}$, for $k,t \in \bN$ such that $0\leq k\leq n-1$ and $\gcd(t,n)=1$ the $q$-Narayana number with parameters $k,t$ is given by
\[\Nar(W,k, t;q) = q^{(n-1-k)(t-1-k)}\frac{1}{\bs{k+1}} \qbinom{n-1}{k}\qbinom{t-1}{k}.\]
Then by \cite[Definition 1.9, Theorem 1.10]{rs18} we have
\[\Nar(W,k,t;q) = \sum_{l(\lambda)=k+1}\Krew(W, \chi^\lambda, t;q).\]
Here, $l(\lambda) =k+1$ if and only if $\br{Q_{\chi^\lambda}, V}|_{q=1} = k$.

\subsection{Cyclic sieving}
We recall \cite[Section 6]{rs18}. The Coxeter number of $W$ of type $A_{n-1}$ is $n$, and here we assume that $t=ns+1$ for some $s \in \bN$. Suppose that $P \subset W$ is a parabolic subgroup such that $\Phi(P) = \chi^\lambda$. In this case, for $d \mid sh+1$ we have $\Krew(W, \chi^\lambda, sh+1;\omega_d) \neq 0$ if and only if [$d \mid m_\lambda(r)$ for all $r\in \bZ_{>0}$] or [there exists a unique $r'$ such that $d \nmid m_\lambda(r')$ and it also satisfies $d \mid m_\lambda(r')-1$]. In this case we have
\[\Krew(W, \chi^\lambda, sh+1;\omega_d) =\left\{
\begin{aligned}
&\frac{1}{m}\binom{s}{s-l(\lambda), m_\lambda(\bZ_{>0})} & \textnormal{ if } d=1,
\\&\binom{ns/d}{ns/d-\floor{l(\lambda)/d}, \floor{m_\lambda(\bZ_{>0})/d}} & \textnormal{ otherwise.}
\end{aligned}\right.\]
This is indeed the number of fixed points in $\{(w_1, \ldots, w_s) \in \NC^{(s)}(W) \mid V^{w_1} \in W\cdot V^P \}$ by the order $d$ element in $\bZ/ns$, as expected.

\section{Type $BC_n$}\label{sec:typebc}
Let $W$ be the Weyl group of type $BC_n$. Here we start with the exotic Springer theory first introduced by Kato \cite{kat09}. Later it was revealed by Achar-Henderson \cite{ah08} that it has a strong connection with the Green functions attached to limit symbols.

\subsection{Exotic Springer representations}
Let $\kk$ be an algebraically closed field whose characteristic is not equal to $2$. For instance we may set $\kk=\bC$ or $\kk=\Fqbar$ where $2\nmid q$. Set $G=GL_{2n}(\kk)$ and $\fg=\Lie G$. We regard $G$ as a group of $\kk$-linear automorphism of $\kk^{2n}$ and $\fg$ as the endomorphism algebra of $\kk^{2n}$.

We fix a symplectic form $\br{\ , \ }$ on $\kk^{2n}$. Then there exists an involution $\theta: G\rightarrow G$ such that for any $g\in G$ and $v, w \in \kk^{2n}$ we have $\br{g^{-1}v,w} = \br{v, \theta(g)w}$. This also induces an involution on $\fg$, which we again denote by $\theta: \fg \rightarrow \fg$. We have an eigenspace decomposition $\fg= \fg^+\oplus \fg^-$ where $\fg^\pm = \{ X \in \fg \mid \br{Xv,w} \pm\br{v,Xw}=0 \textup{ for any } v, w \in \kk^{2n}\}$. Note that $G^\theta$ is isomorphic to the symplectic group $Sp_{2n}(\kk)$, and thus its Weyl group is identified with $W$.

Let $\cN^-$ be the set of nilpotent elements in $\fg^-$. The variety $\cN^-\times \kk^{2n}$ is called the exotic nilpotent cone \cite[1.1]{kat09}. There is a diagonal $G^\theta$-action on $\cN^-\times \kk^{2n}$ defined by $g\cdot (X, v) = (\Ad_g(X), g(v))$. Then the $G^\theta$-orbits in $\cN^-\times \kk^{2n}$ are parametrized by $\fP_{n,2}$ as follows. (ref. \cite{ah08}, \cite{nrs18}) For a $G^\theta$-orbit $\cO \subset \cN^-\times \kk^{2n}$, choose any $(N, v) \in \cO$ and let $\lambda$ be the Jordan type of $N$ as an endomorphism on $\kk^{2n}$. Also let $\hat{\lambda}$ be the Jordan types of $N$ on $\kk^{2n}/\kk[N]v=\kk^{2n}/\spn{N^mv \mid m \in \bN}$. Note that $\lambda$ and $\hat{\lambda}$ does not depend on the choice of $(N, v) \in \cO$. Then there exists a unique pair $(\mu,\nu) \in \fP_{n,2}$ so that $\lambda=(\mu+\nu)\cup (\mu+\nu)$ and $\hat{\lambda}=(\mu+\nu)\cup(\mu_{\geq2}+\nu)$. Again, $(\mu, \nu)$ is independent of the choice of $(N, v) \in \cO$ and this gives a bijective correspondence from the set of $G^\theta$-orbits in $\cN^-\times \kk^{2n}$ to $\fP_{n,2}$. From now on we write $\cO_{\mu,\nu}$ to be the orbit parametrized by $(\mu,\nu) \in \fP_{n,2}$.

Let $\cB$ be the flag variety of $G^\theta$ defined to be:
\[\cB=\{F_\bullet = [F_0\subset F_1\subset \cdots \subset F_{2n-1} \subset F_{2n}=\kk^{2n}] \mid \dim F_i = i, \br{F_i,F_{2n-i}}=0 \textup{ for all } i\in [0,2n]\}.\]
For $(N,v) \in \cN^-\times \kk^{2n}$, we define $\cB_{N,v}= \{F_\bullet \in \cB \mid NF_i \subset F_{i} \textup{ for all } i \in [0, 2n] \textup{ and } v \in F_n\}$, called the exotic Springer fiber of $(N, v)$. Then by \cite{kat09} (see also \cite{ss13}), there exists an action of $W$ on $H^i(\cB_{N,v})$ for each $i \in \bZ$, called the exotic Springer representation.

Note that $H^i(\cB_{N,v})$ does not depend on the choice of $(N,v)$ as long as it is contained in a fixed $G^\theta$-orbit of $\cN^-\times \kk^{2n}$. Now we define $Q_{\mu,\nu} = \sum_{i \in \bN} H^{2i}(\cB_{N,v})q^i$
as a graded $W$-representation where $(N,v)$ is any element in $\cO_{\mu,\nu}$. Then the leading term of $Q_{\mu,\nu}$ is given by $\chi^{\mu,\nu}$, i.e. the exotic Springer correspondence is given by $\cO_{\mu,\nu} \mapsto \chi^{\mu,\nu}$. Moreover, $Q_{\mu,\nu}$ coincides with the Green function with respect to $\chi^{\mu,\nu} \in \Irr(W)$ attached to the limit symbols, which means that $Q_{\mu,\nu}=Q_{\chi^{\mu,\nu}}$

\subsection{Springer representation of $\Lie Sp_{2n}(\Ftbar)$} \label{sec:liesp}
There is another Springer theory, i.e. of $\Lie Sp_{2n}$ in characteristic 2, which shares the same Green functions as exotic one. Here we briefly describe its properties.

We start with the parametrization of the nilpotent orbits in $\Lie Sp_{2n}(\Ftbar)$. (ref. \cite{hes79}, \cite[Section 2.6]{xue12}) Let $\Omega$ be the set of pairs $(\lambda, \kappa)$ where $\lambda \in \fP_{2n}$ and $\kappa$ is a function from $\{r \in \lambda\}$ to $\bN$ such that the following conditions hold:
\begin{enumerate}[label=\textbullet]
\item $m_\lambda(r)$ is even if $r\in \lambda$ is odd,
\item for any $r\in \lambda$ we have $0\leq \kappa(r) \leq r/2$,
\item $\kappa(r) = r/2$ if $m_\lambda(r)$ is odd, and 
\item for $r,r' \in \lambda$ such that $r'\leq r$, we have $\kappa(r')\leq \kappa(r)$ and $r' - \kappa(r') \leq r-\kappa(r)$.
\end{enumerate}
For $N \in \Lie Sp_{2n}(\Ftbar)$, we attach $(\lambda, \kappa) \in \Omega$ to its orbit as follows. Let $\lambda\vdash 2n$ be the Jordan type of $N$. For $r\in \lambda$ we define $\kappa$ to be $\kappa(r) = \min\{i \in \bN \mid \br{N^{2i+1}v, v}=0 \textup{ for any } v\in \ker N^r \}.$
Then $(\lambda,\kappa)\in \Omega$ and it is independent of the choice of $N$ in its orbit. This gives a desired parametrization.

We describe the Springer correspondence of $\Lie Sp_{2n}(\Ftbar)$. It is summarized by the bijection $\iota : \Omega\rightarrow \fP_{n,2}$. (This bijection is deduced from, but not exactly the same as, the one defined in \cite[8.1]{xue12:comb}. Also see \cite[Section 13.1]{kim:liesp}.)  For $(\lambda, \kappa) \in \Omega$, choose $s \in \bN$ such that $2s\geq l(\lambda)$. 
We partition $[1, 2s+1]$ into blocks of size 1 or 2 so that:
\begin{enumerate}[label=$-$]
\item $\{i\}$ is a single block if and only if $\kappa(\lambda_i)=\lambda_i/2 $ and
\item other blocks consist of two consecutive integers.
\end{enumerate}
(Here we adopt the convention that $\kappa(0)=0$.)
Note that if $\{i, i+1\}$ is a block then $\lambda_i=\lambda_{i+1}$ (and thus $\kappa(\lambda_i)=\kappa(\lambda_{i+1})$). Now we set $c_i\in \bN$ for $i \in [1, 2s+1]$ to be:
\begin{enumerate}[label=$-$]
\item if $\{i\}$ is a single block then $c_i = \lambda_i/2$, and
\item otherwise if $\{i, i+1\}$ is a block then $c_i = \kappa(\lambda_i)$ and $c_{i+1} =\lambda_i - \kappa(\lambda_i)$.
\end{enumerate}
Now we set $\mu=(c_1, c_3, \ldots, c_{2s+1})$ and $\nu=(c_2, c_4, \ldots, c_{2s})$ (and remove zeroes at the end if necessary so that $\mu$ and $\nu$ do not depend on the choice of $s$). Then, from the definition of $\Omega$ it follows that $(\mu,\nu) \in \fP_{n,2}$. Now we define $\iota(\lambda, \kappa) = (\mu, \nu)$. This is indeed a bijection and describes the Springer correspondence. In particular, if we let $Q_{\lambda, \kappa}$ be the Green function for the nilpotent orbit parametrized by $(\lambda, \kappa) \in \Omega$ then we have $Q_{\lambda, \kappa} = Q_{\iota(\lambda, \kappa)}$.

\subsection{Analogue of Sommers' theorem}
Here we prove an analogue of \cite[Theorem 2]{som11} in our setting. Let $V=\chi^{(n-1),(1)}\in \Irr(W)$ be the reflection representation of $W$.
\begin{thm}[cf. {\cite[Theorem 2]{som11}}] \label{thm:bcref} For $(\mu,\nu) \in \fP_{n,2}$, we have
\begin{gather*}
\sum_{i=0}^n \br{Q_{\mu,\nu}, \wedge^{i} V}y^{i}=y^{l(\mu,\nu)}\prod_{j=1}^{l(\mu,\nu)}(1+yq^{2j-1}),
\\\textnormal{or equivalently } \sum_{i=0}^n \br{Q_{\mu,\nu}, \wedge^{n-i} V}y^{i}=y^{n-l(\mu,\nu)}\prod_{j=1}^{l(\mu,\nu)}(y+q^{2j-1}),
\end{gather*}
where $l(\mu,\nu)=\max \{l(\mu)-1, l(\nu)\} = l(\mu_{\geq 2}+\nu)$.
\end{thm}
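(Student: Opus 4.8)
The plan is to isolate the single combinatorial identity that drives everything, a closed form for the graded multiplicities of the exterior powers of $V$ in $Q_{\mu,\nu}$, and then to recognize the asserted products as an instance of the Gaussian $q$-binomial theorem. Recall that the exterior powers of the reflection representation are exactly the hook bipartition representations, $\wedge^i V = \chi^{((n-i),(1^i))}$ for $0\leq i\leq n$, so that $\br{Q_{\mu,\nu},\wedge^i V}$ is the graded multiplicity of $\chi^{((n-i),(1^i))}$ in the exotic Green function $Q_{\mu,\nu}$. The claim I would prove is the key identity
\[\br{Q_{\mu,\nu},\wedge^i V} = q^{i^2}\dqbinom{l(\mu,\nu)}{i}\qquad(0\leq i\leq n),\]
after which the theorem is immediate: writing $L=l(\mu,\nu)$ and applying $\sum_{i=0}^{L}q^{i^2}\dqbinom{L}{i}y^i=\prod_{j=1}^{L}(1+yq^{2j-1})$ (the $q$-binomial theorem in base $q^2$ with variable $yq$) yields the first displayed formula. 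Note the target exponents $1,3,\dots,2L-1$ are precisely the exponents of a Coxeter group of type $BC_L$, so the identity says that the $\wedge^\bullet V$-content of $Q_{\mu,\nu}$ mimics the coinvariant algebra of type $BC_L$.

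To establish the key identity I would use the explicit description of the exotic Green functions. By the identification recalled in Section \ref{sec:liesp}, $Q_{\mu,\nu}$ is also the Green function of the nilpotent orbit of $\Lie Sp_{2n}(\Ftbar)$ indexed by $\iota^{-1}(\mu,\nu)$, and the multiplicities $\br{Q_{\mu,\nu},\chi^{(\alpha,\beta)}}$ are the Kostka--Foulkes-type polynomials produced by the Lusztig--Shoji algorithm with limit symbols as in \cite{sho04} (equivalently, by the geometry of \cite{ah08}). The work is to specialize $(\alpha,\beta)$ to the hooks $((n-i),(1^i))$ and read off $q^{i^2}\dqbinom{L}{i}$. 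The most transparent route is an induction mirroring Sommers' proof of \cite[Theorem 2]{som11}: the triangularity of the $P$-matrix noted in the Remark after the algorithm, together with the block decomposition of the limit symbol, should let one peel off the largest part of $\mu_{\geq 2}+\nu$, reducing $L$ to $L-1$ and multiplying the generating function by the single factor $(1+yq^{2L-1})$, which is exactly the inductive step. A sanity check along the way is that $i=1$ gives $\br{Q_{\mu,\nu},V}=q\,\dbs{l(\mu,\nu)}$, whence $\br{Q_{\mu,\nu},V}_{q=1}=l(\mu,\nu)$, consistent with the level function appearing in Theorem \ref{thm:nar}.

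The second displayed identity is then purely formal. Setting $f(y)=\sum_{i=0}^n\br{Q_{\mu,\nu},\wedge^i V}y^i$, the reversed generating function is $\sum_{i=0}^n\br{Q_{\mu,\nu},\wedge^{n-i}V}y^i = y^n f(y^{-1})$, and substituting the product form of $f$ gives $y^{n-L}\prod_{j=1}^{L}(y+q^{2j-1})$; this reindexing reflects the twist $\wedge^{n-i}V\cong\wedge^i V\otimes\sgn$. The main obstacle is the key identity itself, and more precisely the fact that the answer depends on $(\mu,\nu)$ only through $L=l(\mu_{\geq 2}+\nu)$ and not through the remaining data. I expect the crux of the induction to be showing that, in the limit symbol, the single boxes contribute the factors $(1+yq^{2j-1})$ while the paired boxes contribute trivially to the hook multiplicities, so that the number of relevant factors is exactly $l(\mu_{\geq 2}+\nu)$; a geometric alternative would be to exhibit a fibration of the exotic Springer fiber $\cB_{N,v}$ whose $\wedge^\bullet V$-isotypic contribution is that of a type $BC_L$ flag variety, but I expect the combinatorial argument to be the cleaner one to carry out.
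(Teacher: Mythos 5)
Your reduction of the theorem to the key identity $\br{Q_{\mu,\nu},\wedge^i V}=q^{i^2}\dqbinom{l(\mu,\nu)}{i}$ is formally correct (it is equivalent to the theorem via the $q$-binomial theorem in base $q^2$), and your reversal computation is sound; in fact it shows that the first display of the theorem should read $\sum_{i}\br{Q_{\mu,\nu},\wedge^i V}y^i=\prod_{j=1}^{l(\mu,\nu)}(1+yq^{2j-1})$ \emph{without} the prefactor $y^{l(\mu,\nu)}$ --- the printed prefactor is a typo, as one sees from the coefficient of $y^0$ (the trivial representation occurs exactly once in $Q_{\mu,\nu}$), from Lemma \ref{lem:bcref}, and from compatibility with the second display under $y\mapsto y^{-1}$. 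But this reduction is where the content of your proposal stops. The key identity is not an auxiliary lemma: it \emph{is} the theorem, restated. Your plan for proving it --- that triangularity plus ``the block decomposition of the limit symbol should let one peel off the largest part of $\mu_{\geq 2}+\nu$, reducing $L$ to $L-1$ and multiplying the generating function by the single factor $(1+yq^{2L-1})$'' --- is an unsubstantiated expectation, not an argument. The Lusztig--Shoji algorithm determines the whole matrix of multiplicities through a global system of equations; extracting individual hook-representation multiplicities from it admits no obvious recursion of the kind you describe, and you give no mechanism for why ``paired boxes contribute trivially.'' That claim is exactly the hard part, and it is missing.

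Moreover, your assertion that this induction ``mirrors Sommers' proof'' of \cite[Theorem 2]{som11} is inaccurate, and the contrast with the paper's actual proof shows what machinery is needed. The paper does not peel parts of the bipartition: it sets $g_{\mu,\nu}=\sum_i\br{Q_{\mu,\nu},\wedge^{n-i}V}y^i$ and $h_{\mu,\nu}=|\cO_{\mu,\nu}(\bF_q)|\,g_{\mu,\nu}$, and proves by induction on $\dim\cO_{\mu,\nu}$ (base case: Solomon's theorem for the zero orbit) that $\prod_{j=1}^{l(\mu,\nu)}(y+q^{2j-1})$ divides $h_{\mu,\nu}$. The inductive step rests on three nontrivial external inputs: the orthogonality formula for the exotic Green functions (Lemma \ref{lem:som9}), the product formula of \cite[Proposition 3.3]{gns99} for $\tilde{\tau}(\chi^{\mu,\nu})$, which supplies divisibility by $(1+yq^{-2i+1})$ for $i\in[1,l(\mu,\nu)]$, and the Achar--Henderson closure order together with the support results of \cite{ss14}, \cite{kat17}, which give both the triangular shape of the sum and the vanishing $\br{Q_{\mu,\nu},\wedge^{n-j}V}=0$ for $n-j>l(\mu,\nu)$ (hence the factor $y^{n-l(\mu,\nu)}$); one then finishes by a degree count in $y$ and by normalizing the coefficient of $y^n$ to $1$. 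Your sketch engages with none of these inputs and offers no substitute, so there is a genuine gap: completing your route would require proving, from scratch, a peeling recursion for limit-symbol Green functions that is at least as hard as the theorem itself.
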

The rest of this section is devoted to the proof of this theorem. Here we mainly follow the argument of \cite{som11}. Most of the statements therein remain valid with minor modification. We start with the following lemma.
\begin{lem}\label{lem:bcref} We have $\br{Q_{\mu,\nu}, V}=\sum_{i=1}^{l(\mu,\nu)} q^{2i-1}$.
\end{lem}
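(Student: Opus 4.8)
The plan is to carry out, for the reflection representation, the degree-one case of Sommers' computation \cite[Theorem 2]{som11}, transferred from ordinary Springer theory to the exotic one; this lemma then serves as the base case for the full exterior-power statement of the theorem. Concretely, $\br{Q_{\mu,\nu},V}$ is the graded multiplicity of $V$ in the exotic Springer representation $\bigoplus_i H^{2i}(\cB_{N,v})q^i$, and the assertion is that $V$ occurs with multiplicity one in each $q$-degree $1,3,\ldots,2l(\mu,\nu)-1$ and in no other degree. These are the first $l(\mu,\nu)$ exponents of $W$, so the statement is the exact type-$BC$ analogue of the type-$A$ identity $\br{Q_\lambda,V}=q+q^2+\cdots+q^{l(\lambda)-1}$ (a near-hook Kostka--Foulkes polynomial), the passage from consecutive to odd exponents reflecting that the exponents of $W$ are $1,3,\ldots,2n-1$.

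I would split the proof into a count and a grading. For the count, by the transfer of Sommers' argument the ungraded multiplicity of $V$ in $H^*(\cB_{N,v})$ is read off from the reductive part of the centralizer $Z_{Sp_{2n}(\kk)}(N,v)$; because $Sp_{2n}$ has finite center and $V$ is the full $n$-dimensional reflection representation, no correction term appears, in contrast to the $GL_n$ case where the corresponding count is reduced by one (yielding the $l(\lambda)-1$ above). Computing this rank from the orbit data --- either from $\hat\lambda=(\mu+\nu)\cup(\mu_{\geq 2}+\nu)$ directly, or through the characteristic-two datum $(\lambda,\kappa)=\iota^{-1}(\mu,\nu)$ of Section \ref{sec:liesp} --- yields $l(\mu,\nu)=l(\mu_{\geq 2}+\nu)$. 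For the grading, the reflection-isotypic part of $H^*(\cB_{N,v})$ is generated by the image of $V\subset H^2(\cB)$, and since $b(V)=1$ and, by the Remark, the $a$-function equals the fake degree, its copies of $V$ lie in the successive odd degrees $1,3,\ldots,2l(\mu,\nu)-1$ dictated by the type-$BC$ exponents of $W$. Combining the count with the grading gives $\br{Q_{\mu,\nu},V}=\sum_{i=1}^{l(\mu,\nu)}q^{2i-1}$.

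The main obstacle is precisely this transfer: Sommers' identification of the reflection-isotypic part with the invariant theory of the reductive centralizer rests on the ordinary Springer correspondence and on a genuine reductive centralizer inside a reductive group, whereas here the group is $G^\theta=Sp_{2n}(\kk)$ acting on the exotic nilcone $\cN^-\times\kk^{2n}$ (equivalently, via Section \ref{sec:liesp}, $\Lie Sp_{2n}(\Ftbar)$ in characteristic two). Thus the centralizer, its reductive part, and its exponents must all be recomputed in this nonstandard setting, and one must verify both that these exponents are the odd integers and that the freeness of the reflection-isotypic part over the relevant invariants persists. As a self-contained cross-check --- and as a fallback should this geometric freeness prove delicate --- one can instead read off $\br{Q_{\mu,\nu},\chi^{(n-1),(1)}}$ directly from Shoji's explicit description of the Green functions attached to limit symbols \cite{sho04} (together with \cite{ah08}): since $\chi^{(n-1),(1)}$ is near-minimal, its column in the Lusztig--Shoji $P$-matrix is governed by a short computation, after which the closed form $\sum_{i=1}^{l(\mu,\nu)}q^{2i-1}$ follows combinatorially.
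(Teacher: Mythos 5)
There is a genuine gap, and in fact the central counting mechanism you propose is provably wrong in general. You assert that the ungraded multiplicity of $V$ in $H^*(\cB_{N,v})$ equals the rank of the reductive part of the centralizer $Z_{G^\theta}(N,v)$, with ``no correction term.'' But the reductive part of the exotic centralizer is $\prod_{r}Sp_{2m_{\mu,\nu}(r)}$ --- this is visible from Sun's point count $|\cO_{\mu,\nu}(\bF_q)|$ used in Section \ref{sec:bckre} --- so its rank is $\sum_r m_{\mu,\nu}(r)=l(\mu,\nu)-L(\mu,\nu)$, which is \emph{strictly smaller} than $l(\mu,\nu)$ whenever $L(\mu,\nu)>0$ (e.g.\ $(\mu,\nu)=((2),(1,1))$ for $n=4$, where $l=2$ but the reductive centralizer is a single $Sp_2$). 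The multiplicity-equals-reductive-rank heuristic already fails for the ordinary Springer theory of $Sp_{2n}$ in odd characteristic: for $\lambda=(4,2)$ in $\fsp_6$ the reductive centralizer is finite, yet $V$ occurs once. (Relatedly, neither \cite[Theorem 2]{som11} nor the paper proves the exterior-power formula by centralizer invariant theory; both use orthogonality of Green functions plus induction on orbit closures and divisibility.) Your grading step has the same defect from the other side: asserting that the copies of $V$ sit in the \emph{consecutive} odd degrees $1,3,\ldots,2l(\mu,\nu)-1$ because the exponents of $W$ are odd, or because the $a$-function equals the fake degree, is exactly the statement to be proved, not an argument for it; nothing you cite rules out, say, degrees $1,3,7$, and the claimed generation of the $V$-isotypic part by the image of $H^2(\cB)$ is unestablished in the exotic setting.

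The missing input is a known graded computation that the paper simply cites after transferring along $\iota$: since $Q_{\mu,\nu}=Q_{\lambda,\kappa}$ for $(\lambda,\kappa)=\iota^{-1}(\mu,\nu)$, Spaltenstein's result \cite[Proposition 1.7(b)]{spa91} --- which is independent of the characteristic of the base field --- gives $\br{Q_{\lambda,\kappa},V}=\sum_{i=1}^{\floor{l(\lambda)/2}}q^{2i-1}$ directly, and one checks $\floor{l(\lambda)/2}=l(\iota(\lambda,\kappa))$. You correctly identify the transfer to the characteristic-$2$ datum $(\lambda,\kappa)$ as a viable route, but without Spaltenstein's theorem (or an executed version of your fallback through Shoji's limit-symbol Kostka polynomials, which is plausible but left as a sketch and is not as short as you suggest) neither your count nor your grading survives scrutiny.
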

\begin{proof} Let us define $Q_{\lambda, \kappa} \in \cK(W)[q]$ as in \ref{sec:liesp}. Then it follows from Spaltenstein's result \cite[Proposition 1.7(b)]{spa91} that $\br{Q_{\lambda, \kappa}, V} = \sum_{i=1}^{\floor{l(\lambda)/2}} q^{2i-1}$. (Note that his result is independent of the characteristic of the base field.) On the other hand, it is easy to see that $\floor{l(\lambda)/2} = l(\iota(\lambda,\kappa))$. Thus the claim follows from the fact that $\iota$ is a bijection.
\end{proof}

Let us define 
\[g_{\mu,\nu}=\sum_{i=0}^n \br{Q_{\mu,\nu}, \wedge^{n-i} V}y^{i} \quad \textnormal{ and } \quad h_{\mu,\nu} = |\cO_{\mu,\nu}(\bF_q)| g_{\mu,\nu},\]
where $|\cO_{\mu,\nu}(\bF_q)|\in \bQ[q]$ is the number of $\bF_q$-points in $\cO_{\mu,\nu}$ when the base field is $\overline{\bF_q}$ (and $\cO_{\mu,\nu}$ is split over $\bF_q$). Also for $\chi\in \Irr(W)$, we define
\[\tilde{\tau}(\chi) = \sum_{i,j\in \bN} \br{S^i(V)\otimes \wedge^jV, \chi}q^iy^j= \sum_{j\in \bN} \br{S^*(V)\otimes \wedge^jV, \chi}y^j\]
where we set $S^*(V) = \sum_{i \in \bN}S^i(V)q^i$. Now we prove the following lemma.
\begin{lem}[{cf. \cite[(9)]{som11}}] \label{lem:som9}For $\chi \in \Irr(W)$, we have
\[(-1)^nq^{n^2} \tilde{\tau}(\chi) \prod_{i=1}^n(q^{2i}-1)=\sum_{(\mu,\nu) \in \fP_{n,2}}h_{\mu,\nu}\br{Q_{\mu,\nu}, \chi}.\]
\end{lem}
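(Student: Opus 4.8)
The plan is to reduce the asserted identity to a single orthogonality relation among the exotic Green functions, weighted by the orbit sizes $|\cO_{\mu,\nu}(\bF_q)|$. First I would rewrite the left-hand side using Chevalley's identity $S^*(V) = Q_{Id}/\prod_{i=1}^n(1-q^{2i})$ (recall the degrees of $W$ are $2,4,\ldots,2n$). Since $\prod_{i=1}^n(q^{2i}-1) = (-1)^n\prod_{i=1}^n(1-q^{2i})$, the normalizing factor collapses and we obtain
\[(-1)^nq^{n^2}\tilde{\tau}(\chi)\prod_{i=1}^n(q^{2i}-1) = q^{n^2}\sum_{j\in\bN}\br{Q_{Id}\otimes\wedge^jV,\chi}y^j.\]
It is worth noting that $q^{n^2}\prod_{i=1}^n(q^{2i}-1)=|Sp_{2n}(\bF_q)|$, the order of the finite symplectic group; this is the source of the normalization and signals that the whole identity is of point-counting nature.

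Next, the key input is a \emph{second orthogonality relation} for the exotic Green functions: for all $\psi,\chi\in\Irr(W)$,
\[\sum_{(\mu,\nu)\in\fP_{n,2}}|\cO_{\mu,\nu}(\bF_q)|\,\br{Q_{\mu,\nu},\psi}\br{Q_{\mu,\nu},\chi} = q^{n^2}\br{Q_{Id},\psi\otimes\chi\otimes\sgn}.\]
Granting this, I would specialize $\psi=\wedge^{n-i}V$, multiply by $y^i$, and sum over $i$. Using the self-duality of $V$ together with $\wedge^{n-i}V\cong\wedge^iV\otimes\sgn$ (as $\wedge^nV=\det V=\sgn$), the $\sgn$-twist on the right cancels and the orthogonality yields $q^{n^2}\br{Q_{Id},\wedge^iV\otimes\chi}$ for each inner sum. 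Summing against $y^i$ reproduces exactly the collapsed left-hand side above, since $g_{\mu,\nu}=\sum_i\br{Q_{\mu,\nu},\wedge^{n-i}V}y^i$ and $h_{\mu,\nu}=|\cO_{\mu,\nu}(\bF_q)|g_{\mu,\nu}$. This matches the two sides coefficient by coefficient in $y$ and completes the reduction.

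Finally, to establish the orthogonality relation I would invoke Shoji's formulation of the Lusztig--Shoji algorithm: the algorithm produces the transition matrix $P=(\br{Q_{\mu,\nu},\chi})$ together with a diagonal matrix $\Lambda$ satisfying $P\Lambda P^{T}=\Omega$, where the diagonal entries of $\Lambda$ are (up to a uniform power of $q$) the orbit sizes $|\cO_{\mu,\nu}(\bF_q)|$ and $\Omega$ is the Gram matrix of the standard basis. Transporting this through the bijection $\iota$ of Section \ref{sec:liesp} identifies these data with the characteristic-$2$ Springer theory of $\Lie Sp_{2n}(\Ftbar)$, where the orbit-size weights are explicit. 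The main obstacle is precisely this transfer: one must verify that the argument of \cite{som11}, written for a reductive Lie algebra, carries over to the exotic cone, and in particular that the diagonal weights produced by the algorithm are genuinely the exotic orbit sizes $|\cO_{\mu,\nu}(\bF_q)|$ and that the right-hand constant is $q^{n^2}$ rather than some other power. The $\sgn$-twist and the factor $q^{n^2}$ should be reconciled through the fake-degree reciprocity $\br{Q_{Id},\rho\otimes\sgn}(q)=q^{n^2}\br{Q_{Id},\rho}(q^{-1})$, with $n^2$ the number of positive roots of $Sp_{2n}$.
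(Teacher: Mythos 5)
Your proposal is correct and takes essentially the same route as the paper: both combine Chevalley's formula $Q_{Id}=(-1)^n S^*(V)\prod_{i=1}^n(q^{2i}-1)$ with the orthogonality formula for the exotic Green functions specialized at $\varphi=\wedge^{n-j}V$, use $\wedge^{n-j}V\otimes\sgn\cong\wedge^{j}V$, and sum against $y^j$; your scalar orthogonality relation is exactly the paper's representation-valued identity from \cite{sho82} paired against $\chi$ (all representations being self-dual). The ``transfer'' you flag as the main obstacle is dealt with in the paper by citing Shoji's orthogonality as still valid in the exotic setting, with the diagonal weights equal to the genuine orbit counts $|\cO_{\mu,\nu}(\bF_q)|$ because the $G^\theta$-centralizer of any $(N,v)$ is connected, and with exponent $n^2$ because that is the number of positive roots of $W$.
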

\begin{proof}We start with the orthogonality formula of Green functions presented in \cite{sho82}, which is still valid in our situation:
\[q^{n^2}Q_{Id}\otimes \varphi\otimes \chi^{\emptyset, (1^n)} = \sum_{(\mu,\nu) \in \fP_{n,2}}|\cO_{\mu,\nu}(\bF_q)|\br{Q_{\mu,\nu}, \varphi}Q_{\mu,\nu}.\]
Here we use the fact that the number of positive roots in the root system of $W$ is $n^2$ and the centralizer of any $(N, v) \in \cN^- \times \kk^{2n}$ in $G^\theta$ is connected. If we set $ \varphi=\wedge^{n-j} V$ so that $ \varphi \otimes  \chi^{\emptyset, (1^n)} \simeq \wedge^{n-j} V \otimes \wedge^{n} V \simeq \wedge^j V$, then it follows that
\[q^{n^2}Q_{Id}\otimes \wedge^{j}V = \sum_{(\mu,\nu) \in \fP_{n,2}}|\cO_{\mu,\nu}(\bF_q)|\br{Q_{\mu,\nu},\wedge^{n-j}V}Q_{\mu,\nu}.\]
Note that $Q_{Id} = (-1)^nS^*(V)\cdot \prod_{i=1}^n(q^{2i}-1) $ by the formula of Chevalley. Thus we have
\[\left((-1)^nq^{n^2} \prod_{i=1}^n(q^{2i}-1) \right) S^*(V)\otimes \wedge^{j}V = \sum_{(\mu,\nu) \in \fP_{n,2}}|\cO_{\mu,\nu}(\bF_q)|\br{Q_{\mu,\nu},\wedge^{n-j}V}Q_{\mu,\nu}.\]
Now the result follows from taking $\br{-, \chi}$, multiplying $y^j$, and summing up over $j \in \bN$.
\end{proof}

From \cite[Proposition 3.3]{gns99}, we have
\[\tilde{\tau}(\chi^{\mu,\nu})=q^{2z(\mu)+2z(\nu) +|\nu|} \prod_{x' \in \mu} \frac{1+yq^{2c(x')+1}}{1-q^{2h(x')}}\prod_{x'' \in \nu} \frac{1+yq^{2c(x'')-1}}{1-q^{2h(x'')}}\]
where $z(\lambda) = \sum_{i\in \bZ_{>0}}(i-1)\lambda_i$ and $c(b)$ (resp. $h(b)$) is the content($=$(column index)$-$(row index)) (resp. the hook length) of $b$ in the Young diagram. In particular, by considering boxes at the first column of the Young diagrams of $\mu$ and $\nu$, we see that $(1+yq^{-2i+1})\mid\tilde{\tau}(\chi^{\mu,\nu})$ when $i \in [1,l(\mu,\nu)]$. (This statement is vacuous when $l(\mu, \nu) = 0$, i.e. $\mu=(n)$ and $\nu=\emptyset$.)

Before the proof of Theorem \ref{thm:bcref} let us observe the triangularity of Springer representations as follows. For $(\rho, \sigma), (\mu,\nu) \in \fP_{n,2}$, we say that $(\rho, \sigma) \leq (\mu,\nu)$ if
\begin{gather*}
\rho_1+\sigma_1+\cdots+\rho_k+\sigma_k \leq \mu_1+\nu_1+\cdots+\mu_k+\nu_k, \textnormal{ and}
\\\rho_1+\sigma_1+\cdots+\rho_k+\sigma_k+\rho_{k+1} \leq \mu_1+\nu_1+\cdots+\mu_k+\nu_k+\mu_{k+1}
\end{gather*}
for any $k \in \bN$. Then the result of \cite[Theorem 6.3]{ah08} states that $\cO_{\rho, \sigma} \subset \overline{\cO_{\mu,\nu}}$ if and only if $(\rho, \sigma) \leq (\mu,\nu)$. Moreover, by \cite{ss14} and \cite{kat17}, it follows that $\br{Q_{\mu,\nu}, \chi^{\rho, \sigma}}\neq 0$ if and only if $(\rho, \sigma) \leq (\mu,\nu)$. Note that $(\rho,\sigma)\leq (\mu,\nu)$ then $l(\rho, \sigma) \leq l(\mu,\nu)$. In particular, if we set $\chi^{\rho, \sigma} = \wedge^j V$, i.e. $(\rho, \sigma) = ((n-j), (1, 1, \ldots, 1))$, then the above inequalities are translated to $n-j \leq \mu_1$ and $n-j+k \leq \mu_1+\nu_1+\cdots+\mu_k+\nu_k$ for any $1\leq k \leq j$. Thus when $k=j$ we have that $l(\mu,\nu)\leq l(\mu), l(\nu) \leq j$.

We are ready to prove Theorem \ref{thm:bcref}, i.e. $g_{\mu,\nu}=y^{n-l(\mu,\nu)}\prod_{j=1}^{l(\mu,\nu)}(y+q^{2j-1})$. To this end, first we prove that the RHS divides $h_{\mu,\nu}$ by induction on $\dim \cO_{\mu,\nu}$. First, if $\dim \cO_{\mu,\nu}=0$, i.e. $\cO_{\mu,\nu}=\{0\}$ then it follows from the result of Solomon \cite{sol63}. In general, by Lemma \ref{lem:som9} and the triangularity of Springer representations we have
\[(-1)^nq^{n^2} \tilde{\tau}(\chi^{\mu,\nu})\prod_{i=1}^n(q^{2i}-1) = h_{\mu,\nu}q^{ 2z(\mu)+2(\nu)+ |\nu|} +\sum_{(\rho,\sigma)>(\mu,\nu) }h_{\rho,\sigma}\br{Q_{\rho,\sigma}, \chi^{\mu,\nu}}.\]
Here we use the fact that $\deg_q Q_{\mu,\nu} = 2z(\mu)+2z(\nu)+ |\nu| = n^2-\dim \cO_{\mu,\nu}/2$ (see \cite[Remark 2.6]{ss13}). Now by induction hypothesis and the argument above, it follows that $\prod_{j=1}^{l(\mu,\nu)}(y+q^{2j-1}) \mid h_{\mu,\nu}$.

Since $|\cO_{\mu,\nu}(\bF_q)|$ does not have any common factor with $\prod_{j=1}^{l(\mu,\nu)}(y+q^{2j-1})$, it follows that $\prod_{j=1}^{l(\mu,\nu)}(y+q^{2j-1}) | g_{\mu,\nu}$. On the other hand, we previously observed that if $n-j>l(\mu,\nu)$ then $\br{Q_{\mu,\nu}, \wedge^{n-j}V}=0$. Thus it follows that $y^{n-l(\mu,\nu)} \mid g_{\mu,\nu}$ as well. Now by considering the degree with respect to $y$, it follows that $g_{\mu,\nu} = c y^{n-l(\mu,\nu)}\prod_{j=1}^{l(\mu,\nu)}(y+q^{2j-1})$ for some $c \in \kk$. However, $c=1$ since the coefficient of $y^n$ in $g_{\mu,\nu}$ equals 1 (as the trivial representation only occurs at the zeroth cohomology group of the corresponding Springer fiber). This suffices for the proof of Theorem \ref{thm:bcref}.

\subsection{$q$-Kreweras numbers and positivity} \label{sec:bckre}
In this section we find a closed formula of $\Krew(W, \chi^{\mu,\nu},t;q)$. To this end, first we let $m_{\mu,\nu}(r)=\floor{m_{(\mu+\nu)\cup (\mu_{\geq 2}+\nu)}(r)/2}$ for $(\mu,\nu) \in \fP_{n,2}$ and $r \in \bZ_{>0}$. This can also be interpreted as follows. We set $\Lambda=(\mu_1, \nu_1, \mu_2, \nu_2, \ldots)=(a_1, a_2, a_3, a_4, \ldots)$. Then $\Lambda$ is not necessarily a partition, but it is a quasi-partition in the sense of \cite{ahs11}; we have $a_i\geq a_{i+2}$ for $ i \in \bZ_{>0}$. Also we have $\mu+\nu=(a_{2i-1}+a_{2i})_{i \geq 1}$ and $\mu_{\geq 2}+\nu=(a_{2i}+a_{2i+1})_{i \geq 1}$.

\begin{lem} \label{lem:hm} Set $\Lambda=(a_1, a_2, \ldots)$ as above. For $r\in (\mu+\nu)\cup (\mu_{\geq 2}+\nu)$, suppose that $i$ (resp. $j$) is the smallest (resp. largest) index so that $a_{i}+a_{i+1} = r$ (resp. $a_{j}+a_{j+1} = r$). Then, there are three possible cases:
\begin{enumerate}
\item If $i$ and $j$ are both even then $m_{\mu+\nu}(r)=m_{\mu_{\geq 2}+\nu}(r)-1$. Thus we have $m_{\mu,\nu}(r) = m_{\mu+\nu}(r) = m_{\mu_{\geq 2}+\nu}(r)-1$.
\item If $i \not\equiv j \pmod 2$ then $m_{\mu+\nu}(r)=m_{\mu_{\geq 2}+\nu}(r)$. Thus we have $m_{\mu,\nu}(r)= m_{\mu+\nu}(r) =m_{\mu_{\geq 2}+\nu}(r)$.
\item If $i$ and $j$ are both odd then $m_{\mu+\nu}(r)=m_{\mu_{\geq 2}+\nu}(r)+1$. Thus we have $m_{\mu,\nu}(r) = m_{\mu+\nu}(r) -1=m_{\mu_{\geq 2}+\nu}(r)$.
\end{enumerate}
\end{lem}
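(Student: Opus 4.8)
The plan is to reduce the entire lemma to a single monotonicity observation about the sequence $\Lambda = (a_1, a_2, \ldots)$. Set $s_k = a_k + a_{k+1}$ for $k \geq 1$. The first step is to note that because $\Lambda$ is a quasi-partition, i.e. $a_k \geq a_{k+2}$ for all $k$, the sum sequence is weakly decreasing: indeed $s_k - s_{k+1} = a_k - a_{k+2} \geq 0$. Consequently, for the fixed value $r > 0$ the level set $S_r = \{k \mid s_k = r\}$ is a finite contiguous block of integers $\{i, i+1, \ldots, j\}$, whose endpoints $i = \min S_r$ and $j = \max S_r$ are exactly the smallest and largest indices appearing in the statement. (Finiteness is automatic since $s_k = 0$ for large $k$, and $r>0$.)

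Next I would translate the two multiplicities into parity counts on this block. Using the identities $\mu + \nu = (a_{2i-1} + a_{2i})_{i \geq 1}$ and $\mu_{\geq 2} + \nu = (a_{2i} + a_{2i+1})_{i \geq 1}$ recorded just before the lemma, a part of $\mu + \nu$ equal to $r$ corresponds precisely to an \emph{odd} index $k \in S_r$, while a part of $\mu_{\geq 2} + \nu$ equal to $r$ corresponds precisely to an \emph{even} index $k \in S_r$; the hypothesis $r > 0$ guarantees that we are counting genuine parts rather than trailing zeros. Hence $m_{\mu+\nu}(r)$ equals the number of odd integers in $\{i, \ldots, j\}$ and $m_{\mu_{\geq 2}+\nu}(r)$ equals the number of even integers in $\{i, \ldots, j\}$.

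The three cases then collapse into an elementary parity count on the interval $\{i, \ldots, j\}$. If $i$ and $j$ are both even, the block has an odd number $j - i + 1$ of terms, with one more even index than odd, giving $m_{\mu+\nu}(r) = m_{\mu_{\geq 2}+\nu}(r) - 1$; if both are odd, the symmetric count gives $m_{\mu+\nu}(r) = m_{\mu_{\geq 2}+\nu}(r) + 1$; and if $i \not\equiv j \pmod 2$, the block has an even number of terms split evenly, giving equality. Finally, since $m_{(\mu+\nu)\cup(\mu_{\geq 2}+\nu)}(r) = m_{\mu+\nu}(r) + m_{\mu_{\geq 2}+\nu}(r) = |S_r| = j - i + 1$, applying $\floor{|S_r|/2}$ recovers in each case the asserted value of $m_{\mu,\nu}(r)$, completing the proof.

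I do not expect a serious obstacle here; the content is genuinely elementary once the right viewpoint is fixed. The two places that require care are the monotonicity step, which is where the quasi-partition hypothesis is indispensable (without $a_k \geq a_{k+2}$ the set $S_r$ need not be an interval and the trichotomy fails), and the odd/even index-to-part dictionary, where one must keep the boundary convention straight so that trailing zeros are never miscounted as parts equal to $r$.
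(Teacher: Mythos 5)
Your proof is correct and complete. The paper's own proof consists only of the sentence ``It can be shown case-by-case,'' and your argument --- the weak monotonicity $s_k - s_{k+1} = a_k - a_{k+2} \geq 0$ forcing the level set $\{k \mid a_k + a_{k+1} = r\}$ to be a contiguous interval $\{i,\ldots,j\}$, the dictionary identifying parts of $\mu+\nu$ with odd indices and parts of $\mu_{\geq 2}+\nu$ with even indices, and then the elementary parity count --- is exactly the case analysis the paper leaves to the reader, with the one genuinely load-bearing observation (contiguity, which is what makes the trichotomy depend only on the parities of $i$ and $j$) made explicit rather than implicit.
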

\begin{proof} It can be shown case-by-case.
\end{proof}
Let $L(\mu,\nu)$ be the number of $r\in (\mu+\nu)\cup (\mu_{\geq 2}+\nu)$ such that the first condition in Lemma \ref{lem:hm} is satisfied. (In this case we necessarily have $r\in \mu_{\geq 2}+\nu$.) In other words, we have $L(\mu,\nu)+\sum_{r>0}m_{\mu,\nu}(r) = l(\mu,\nu)=\sum_{r>0}m_{\mu_{\geq2}+\nu}(r)$. Also we define 
\[z(\mu,\nu) = 2z(\mu)+2z(\nu)+|\nu| = 2z(\mu+\nu)+|\nu|\]
where $z(\lambda) = \sum_{i>0}(i-1)\lambda_i$. We set $d(\mu,\nu)= \sum_{r >0} m_{\mu,\nu}(r)(m_{\mu,\nu}(r)+1)$. 
\begin{thm} \label{thm:bckre} For $(\mu,\nu)\in \fP_{n,2}$, the $q$-Kreweras number $\Krew(W,\chi^{\mu,\nu},t;q)$ is given by 
\begin{align*}
q^{t(n-l(\mu,\nu))+l(\mu,\nu)^2-2z(\mu,\nu) +d(\mu,\nu)-n}\prod_{j=1}^{L(\mu,\nu)}(q^{t-2j+1}-1)
\dqbinom{(t-1)/2-L(\mu,\nu)}{(t-1)/2-l(\mu,\nu),m_{\mu,\nu}(\bZ_{>0})}.
\end{align*}
In particular, when $t$ is odd, $\Krew(W,\chi^{\mu,\nu},t;q) \in \bN[q]$ if and only if $L(\mu,\nu)=0$.
\end{thm}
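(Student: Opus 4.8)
The plan is to read the positivity statement directly off the closed formula, treating its three factors separately. Abbreviating $l=l(\mu,\nu)$ and $L=L(\mu,\nu)$, the formula of Theorem \ref{thm:bckre} reads
\[
\Krew(W,\chi^{\mu,\nu},t;q)=q^{E}\,\prod_{j=1}^{L}\bigl(q^{t-2j+1}-1\bigr)\,\dqbinom{(t-1)/2-L}{(t-1)/2-l,\,m_{\mu,\nu}(\bZ_{>0})},
\]
where $E=t(n-l)+l^{2}-2z(\mu,\nu)+d(\mu,\nu)-n$. First I would verify that two of the three factors always lie in $\bN[q]$. The trailing factor is a $q^{2}$-multinomial coefficient: its lower entries $(t-1)/2-l$ and the $m_{\mu,\nu}(r)$ are nonnegative and, by the identity $L+\sum_{r>0}m_{\mu,\nu}(r)=l$ recorded after Lemma \ref{lem:hm}, sum to the upper entry $(t-1)/2-L$, so it is a bona fide multinomial coefficient lying in $\bN[q^{2}]\subseteq\bN[q]$. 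Since $\Krew(W,\chi^{\mu,\nu},t;q)$ is an honest polynomial for odd $t$ (established together with the closed formula; cf. Theorem \ref{thm:int}), the exponent $E$ is forced to be nonnegative, so the monomial $q^{E}$ also lies in $\bN[q]$. Hence all the sign information is carried by the middle factor $\prod_{j=1}^{L}(q^{t-2j+1}-1)$.

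The ``if'' direction is then immediate. When $L=0$ the product is empty and equals $1$, so $\Krew(W,\chi^{\mu,\nu},t;q)$ is a product of two elements of $\bN[q]$ and therefore lies in $\bN[q]$.

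For the ``only if'' direction I would argue by contraposition and specialize at $q=1$. If $L\geq 1$, then at least one factor $q^{t-2j+1}-1$ vanishes at $q=1$, whence $\Krew(W,\chi^{\mu,\nu},t;1)=0$. On the other hand the three factors are all nonzero in the integral domain $\bQ(q)$---in particular the multinomial coefficient is a nonzero polynomial---so $\Krew(W,\chi^{\mu,\nu},t;q)$ itself is a nonzero polynomial. A nonzero element of $\bN[q]$ has strictly positive coefficient sum and therefore evaluates to a positive integer at $q=1$; since our polynomial is nonzero yet vanishes there, it cannot lie in $\bN[q]$. This yields $L\geq 1\Rightarrow\Krew(W,\chi^{\mu,\nu},t;q)\notin\bN[q]$, completing the equivalence.

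The argument is short precisely because the substantive work lies in the closed formula, which I am taking as given; the only points needing care are bookkeeping ones. The main thing to check is that the multinomial coefficient's entries are genuinely nonnegative and sum correctly, so that it is a nonzero element of $\bN[q]$ rather than a degenerate zero, and that $E\geq 0$. The one place I would be cautious is ensuring these remain valid across the full admissible range of odd $t$ rather than just the generic large-$t$ regime; here I would lean on the identity $L+\sum_{r>0}m_{\mu,\nu}(r)=l$ together with the polynomiality of $\Krew$ inherited from the closed formula.
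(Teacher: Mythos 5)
There is a genuine gap: your proposal does not prove the theorem, because the theorem \emph{is} the closed formula. The quantity $\Krew(W,\chi^{\mu,\nu},t;q)$ is defined abstractly as the coefficient of $Q_{\chi^{\mu,\nu}}$ in the expansion $\cH_t = \bigoplus_{\chi} \Krew(W,\chi,t;q)\,Q_\chi$, and the substance of Theorem \ref{thm:bckre} is that this coefficient equals the displayed product. You write that you are ``taking the closed formula as given,'' but that is precisely the assertion to be established; what you prove is only the final ``In particular'' clause, which in the paper is a one-line consequence read off from the formula. The paper's actual proof runs as follows: starting from $S^*(V)=Q_{Id}\prod_{i=1}^n(1-q^{2i})^{-1}$, it invokes the orthogonality formula $q^{n^2}Q_{Id}\otimes \wedge^{j}V = \sum_{(\mu,\nu)}|\cO_{\mu,\nu}(\bF_q)|\br{Q_{\mu,\nu},\wedge^{n-j}V}\,Q_{\mu,\nu}$ to extract
\[
\Krew(W,\chi^{\mu,\nu},t;q)= q^{-n^2}\prod_{i=1}^n (1-q^{2i})^{-1}\sum_{j \in \bN}(-q^t)^j\, |\cO_{\mu,\nu}(\bF_q)|\,\br{Q_{\mu,\nu},\wedge^{n-j}V},
\]
then evaluates the inner sum by specializing Theorem \ref{thm:bcref} at $y=-q^t$, substitutes Sun's closed formula for the exotic orbit count $|\cO_{\mu,\nu}(\bF_q)|$, and uses Lemma \ref{lem:hm} to rewrite the resulting quotient as the $q^2$-multinomial. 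None of this machinery (orthogonality of Green functions, the $\wedge^\bullet V$ generating function, the point count) appears in your proposal, so the main claim is left unproven.

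On the part you do address, your reasoning is essentially sound and agrees with what the paper leaves implicit: the multinomial's entries sum correctly by $L(\mu,\nu)+\sum_{r>0}m_{\mu,\nu}(r)=l(\mu,\nu)$, the ``if'' direction is immediate, and the ``only if'' direction follows by evaluating at $q=1$. One caution: your contrapositive argument needs all three factors to be nonzero as rational functions, which can fail for small odd $t$ (if $t=2j-1$ for some $j\le L(\mu,\nu)$, a factor $q^{t-2j+1}-1$ vanishes identically and $\Krew$ is the zero polynomial, which does lie in $\bN[q]$); so the equivalence as you argue it really holds in the regime $(t-1)/2\ge l(\mu,\nu)$, a hypothesis worth making explicit. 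But this is a secondary point; the essential missing content is the derivation of the formula itself.
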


The rest of this section is devoted to its proof. Using $S^*(V) = Q_{Id}\cdot\prod_{i=1}^n (1-q^{2i})^{-1}$, we have
\begin{align*}
\cH_t&=\sum_{j \in \bN}(-q^t)^j S^*(V) \otimes \wedge^jV =\prod_{i=1}^n (1-q^{2i})^{-1}\sum_{j \in \bN}(-q^t)^j Q_{Id} \otimes \wedge^jV.
\end{align*}
Recall that $q^{n^2}Q_{Id}\otimes \wedge^{j}V = \sum_{(\mu,\nu) \in \fP_{n,2}}|\cO_{\mu,\nu}(\bF_q)|\br{Q_{\mu,\nu},\wedge^{n-j}V}Q_{\mu,\nu}$. Thus we have
\begin{align*}
\cH_t&=q^{-n^2}\prod_{i=1}^n (1-q^{2i})^{-1}\sum_{j \in \bN}(-q^t)^j  \sum_{(\mu,\nu) \in \fP_{n,2}}|\cO_{\mu,\nu}(\bF_q)|\br{Q_{\mu,\nu},\wedge^{n-j}V}Q_{\mu,\nu}.
\end{align*}
It follows that
\[\Krew(W, \chi^{\mu,\nu},t;q)= q^{-n^2}\prod_{i=1}^n (1-q^{2i})^{-1}\sum_{j \in \bN}(-q^t)^j |\cO_{\mu,\nu}(\bF_q)|\br{Q_{\mu,\nu},\wedge^{n-j}V}.\]
On the other hand, if we substitute $y$ with $-q^t$ in the equation of Theorem \ref{thm:bcref} then we have \begin{align*}
\sum_{i=0}^n \br{Q_{\mu,\nu}, \wedge^{n-j} V}(-q^t)^{j} &=(-q^t)^{n-l(\mu,\nu)}\prod_{j=1}^{l(\mu,\nu)}(-q^t+q^{2j-1})
\\&=(-q^t)^{n-l(\mu,\nu)}\prod_{j=1}^{l(\mu,\nu)}(-q^{2j-1})\prod_{j=1}^{l(\mu,\nu)}(q^{t-2j+1}-1)
\\&=(-1)^n q^{t(n-l(\mu,\nu)) +l(\mu,\nu)^2}\prod_{j=1}^{l(\mu,\nu)}(q^{t-2j+1}-1).
\end{align*}
Thus we have
\begin{align*}
\Krew(W, \chi^{\mu,\nu},t;q)&= q^{-n^2}\prod_{i=1}^n (1-q^{2i})^{-1} |\cO_{\mu,\nu}(\bF_q)| \cdot (-1)^n q^{t(n-l(\mu,\nu)) +l(\mu,\nu)^2}\prod_{j=1}^{l(\mu,\nu)}(q^{t-2j+1}-1)
\\&= q^{t(n-l(\mu,\nu)) +l(\mu,\nu)^2-n^2}\prod_{i=1}^n (q^{2i}-1)^{-1}  \prod_{j=1}^{l(\mu,\nu)}(q^{t-2j+1}-1)|\cO_{\mu,\nu}(\bF_q)|.
\end{align*}
We recall the result of Sun \cite[Corollary 3.13]{sun11} that gives the closed formula of $|\cO_{\mu,\nu}(\bF_q)|$:
\[|\cO_{\mu,\nu}(\bF_q)| = q^{2n^2-2z(\mu,\nu)}\frac{ \prod_{i=1}^n(1-q^{-2i})}{\prod_{r\in J} \prod_{i=1}^{m_{\mu+\nu}(r)-1}(1-q^{-2i})\prod_{r\not\in J}\prod_{i=1}^{m_{\mu+\nu}(r)}(1-q^{-2i})}\]
where $r \in J$ if and only if $r$ is in the third case of Lemma \ref{lem:hm}. (See \cite[Notation 2.5]{sun11}.) Therefore, by Lemma \ref{lem:hm} we see that
\begin{align*}
|\cO_{\mu,\nu}(\bF_q)| &= q^{2n^2-2z(\mu,\nu)}\frac{ \prod_{i=1}^n(1-q^{-2i})}{\prod_{r>0}\prod_{i=1}^{m_{\mu,\nu}(r)}(1-q^{-2i})}
\\&=q^{n^2-2z(\mu,\nu)+d(\mu,\nu)-n}\frac{ \prod_{i=1}^n(q^{2i}-1)}{\prod_{r>0}\prod_{i=1}^{m_{\mu,\nu}(r)}(q^{2i}-1)}.
\end{align*}
We substitute $|\cO_{\mu,\nu}(\bF_q)|$ with this formula in the expansion of $\Krew(W,\chi^{\mu,\nu},t;q)$ to see that
\begin{align*}
&\Krew(\mu,\nu;t)(q)= q^{t(n-l(\mu,\nu)) +l(\mu,\nu)^2-2z(\mu,\nu)+d(\mu,\nu)-n} \frac{\prod_{j=1}^{l(\mu,\nu)}(q^{t-2j+1}-1)}{\prod_{r>0}\prod_{i=1}^{m_{\mu,\nu}(r)}(q^{2i}-1)}
\\&= q^{t(n-l(\mu,\nu)) +l(\mu,\nu)^2-2z(\mu,\nu)+d(\mu,\nu)-n}\prod_{j=1}^{L(\mu,\nu)}(q^{t-2j+1}-1)
\frac{ \prod_{j=1}^{l(\mu,\nu)-L(\mu,\nu)}(q^{t-2j-2L(\mu,\nu)+1}-1)}{\prod_{r>0}\prod_{i=1}^{m_{\mu,\nu}(r)}(q^{2i}-1)}
\\&= q^{t(n-l(\mu,\nu)) +l(\mu,\nu)^2-2z(\mu,\nu)+d(\mu,\nu)-n}
\prod_{j=1}^{L(\mu,\nu)}(q^{t-2j+1}-1)
\dqbinom{(t-1)/2-L(\mu,\nu)}{(t-1)/2-l(\mu,\nu),m_{\mu,\nu}(\bZ_{>0})},
\end{align*}
which is what we want to prove.

\subsection{Relation with $q$-Narayana numbers}
When $W$ is of type $BC_n$, due to Lemma \ref{lem:bcref}, Theorem \ref{thm:nar} reads
\[\Nar(W,k,t;q) =\sum_{l(\mu,\nu)=k}\Krew(W,\chi^{\mu,\nu}, t;q).\]
First, note that we have
\[\Nar(W,k,t;q)=q^{(n-k)(t-1-2k)}\dqbinom{n}{k}\dqbinom{(t-1)/2}{k}.\]
On the other hand, from \ref{sec:bckre} we have
\begin{align*}
\Krew(W, \chi^{\mu,\nu},t;q)&= q^{t(n-l(\mu,\nu)) +l(\mu,\nu)^2-n^2}\prod_{i=1}^n (q^{2i}-1)^{-1}  \prod_{j=1}^{l(\mu,\nu)}(q^{t-2j+1}-1) |\cO_{\mu,\nu}(\bF_q)|.
\end{align*}
Comparing the formulas above, it suffices to show that
\begin{align*}
\sum_{ l(\mu,\nu)=k}|\cO_{\mu,\nu}(\bF_q)| 
=q^{(n-k)(n-k-1)}(q^2-1)^{n-k}\frac{\dbs{n}!^2}{\dbs{n-k}!\dbs{k}!^2}.
\end{align*}

Note that the RHS is the same as the one given in \cite[Lemma 5.1]{rs18}. Indeed, let $\fP_{2n}^C\subset \fP_{2n}$ be the set of Jordan types of nilpotent elements in $\Lie Sp_{2n}(\Fqbar)$ where $2 \nmid q$. For $\lambda \in \fP_{2n}^C$, let $|\cO^{Sp}_{\lambda}(\Fq)|$ be the number of $\Fq$-points in the nilpotent orbit of $\Lie Sp_{2n}(\Fqbar)$ whose elements are of Jordan type $\lambda$ (when such an orbit is split over $\Fq$). Then  \cite[Lemma 5.1]{rs18} states that we have
\[\sum_{\lambda \in \fP_{2n}^C, \floor{l(\lambda)/2}=k}|\cO^{Sp}_{\lambda}(\Fq)|=q^{(n-k)(n-k-1)}(q^2-1)^{n-k}\frac{\dbs{n}!^2}{\dbs{n-k}!\dbs{k}!^2}.\]
(See also \cite{lus76}.) Therefore, it suffices to show that 
\[ \sum_{(\mu,\nu) \in \fP_{n,2}, l(\mu,\nu)=k} |\cO_{\mu,\nu}(\bF_q)| = \sum_{\lambda \in \fP_{2n}^C, \floor{l(\lambda)/2}=k}|\cO^{Sp}_{\lambda}(\Fq)|.\]

To this end, following \cite{ahs11}, first we define $\Phi^C: \fP_{n,2} \rightarrow \fP_{2n}$ as follows. For $(\mu, \nu) \in \fP_{n,2}$ such that $\mu=(\mu_1, \mu_2, \ldots)$ and $\nu=(\nu_1, \nu_2, \ldots)$, we consider the sequence $(2\mu_1, 2\nu_1, 2\mu_2, 2\nu_2, \ldots)$ and substitute any two consecutive integers $s, t$ such that $s<t$ with $(s+t)/2, (s+t)/2$, respectively. (These substitutions do not overlap with one another.) Then the result is a partition of $2n$ which is set to be the image of $(\mu,\nu)$ under $\Phi^C$. Note that we have $l(\mu,\nu) = \floor{l(\Phi^C(\mu,\nu))/2}$.

We define another map $-^C: \fP_{n,2} \rightarrow \fP_{n,2}$ as follows. For $(\mu,\nu)$ as above, whenever we have $\mu_i < \nu_i-1$ we replace $\mu_i, \nu_i$ with $\floor{(\mu_i+\nu_i)/2}, \ceil{(\mu_i+\nu_i)/2}$, respectively, and whenever we have $\nu_i<\mu_{i+1}+1$ we replace $\nu_i, \mu_{i+1}$ with $\floor{(\nu_i+\mu_{i+1})/2}, \ceil{(\nu_i+\mu_{i+1})/2}$, respectively. (These substitutions do not overlap with one another.) Then the result is again an element of $\fP_{n,2}$ denoted by $(\mu, \nu)^C$. Note that we have $l(\mu,\nu) = l((\mu,\nu)^C)$.

By \cite[Theorem 2.23(2)]{ahs11}, when $\Phi^C(\rho, \sigma)\in \fP_{2n}^C \subset \fP_{2n}$ we have
\[ \sum_{(\mu,\nu) \in \fP_{n,2},(\mu,\nu)^C=(\rho, \sigma)} |\cO_{\mu,\nu}(\bF_q)| = |\cO^{Sp}_{\Phi^C(\rho,\sigma)}(\Fq)|.\]
On the other hand, by \cite[Proposition 2.4]{ahs11}, $\Phi^C$ is a bijection between $\fP_{2n}^C$ and the image of $-^C$. Since $l(\mu,\nu) = l(\rho, \sigma) = \floor{l(\Phi^C(\rho, \sigma))/2}$, it follows that
\[ \sum_{(\mu,\nu) \in \fP_{n,2}, l(\mu,\nu)=k} |\cO_{\mu,\nu}(\bF_q)| = \sum_{\lambda \in \fP_{2n}^C, \floor{l(\lambda)/2}=k}|\cO^{Sp}_{\lambda}(\Fq)|\]
as desired.

\subsection{More on the positivity of $q$-Kreweras numbers} \label{sec:bcposmore}
In Theorem \ref{thm:bckre}, for odd $t$ we have $\Krew(W, \chi^{\mu,\nu},t;q) \in \bN[q]$ if and only if the first condition of Lemma \ref{lem:hm} is invalid for any $r \in (\mu+\nu)\cup(\mu_{\geq 2}\cup \nu)$. Here we give other equivalent interpretations of this condition and prove Theorem \ref{thm:pos} for type $BC_n$.

Recall the Springer correspondence of $\Lie Sp_{2n}$ in characteristic 2 discussed in \ref{sec:liesp}.
Following \cite[Section 4.2]{kim:liesp}, we define the notion of critical values. We say that $(\lambda, \kappa)\in \Omega$ is critical at $r \in \lambda$, or $r$ is a critical value of $(\lambda, \kappa)$, if
\begin{enumerate}[label=\textbullet]
\item $\kappa(r) \neq 0$,
\item for $r' \in \und{\lambda}$, if $r'<r$ then $\kappa(r')<\kappa(r)$, and 
\item for $r' \in \und{\lambda}$, if $r'>r$ then $r'-\kappa(r')<r-\kappa(r)$.
\end{enumerate}
Using this notion, we may decompose a nilpotent element into a ``distinguished part'' and an ``induced part''. Indeed, we define $\tilde{m}(r)$ to be
\begin{enumerate}[label=\textbullet]
\item 0 if $r$ is critical,
\item 1 if ($r$ is critical and) $m_\lambda(r)$ is odd, and
\item 2 if $r$ is critical and $m_\lambda(r)$ is even.
\end{enumerate}
(Indeed, it is easy to show that $r$ is critical if $m_\lambda(r)$ is odd.)
Now we set $(\tilde{\lambda}, \tilde{\kappa})$ to be $\tilde{\lambda} = (1^{\tilde{m}(1)}2^{\tilde{m}(2)}\cdots)$ and $\tilde{\kappa} =\kappa|_{\tilde{\lambda}}$. Also we set $(\lambda', \kappa')$ to be $(\lambda'=1^{m_\lambda(1)-\tilde{m}(1)}2^{m_\lambda(2)-\tilde{m}(2)}\cdots)$ (so that $\tilde{\lambda}+\lambda'=\lambda$) and $\kappa'=0$. If we choose $\tilde{N}\in \Lie Sp_{|\tilde{\lambda}|}(\Ftbar)$ and $N'\in \Lie Sp_{|\lambda'|}(\Ftbar)$ such that their orbits are parametrized by $(\tilde{\lambda}, \tilde{\kappa})$ and $(\lambda', \kappa')$, respectively, then it is not hard to show that $N \simeq \tilde{N}\oplus N'$, i.e. there exists a direct sum decomposition $\Ftbar^{2n}=\tilde{V}\oplus V'$ such that $N|_{\tilde{V}}\simeq \tilde{N}$ and $N|_{V'}\simeq N'$. Moreover, one can show that $\tilde{N}$ is a distinguished nilpotent element and $N'$ is a regular nilpotent element in a certain Levi subalgebra $\Lie Sp_{2n}(\Ftbar)$ all of whose simple factors are of type $A$.

\begin{thm} \label{thm:bcpos}Suppose that $\iota(\lambda, \kappa) = (\mu,\nu)$. Then the following conditions are equivalent:
\begin{enumerate}
\item $\Krew(W, \chi^{\mu,\nu}, t;q) \in \bN[q]$, i.e. $L(\mu,\nu)=0$.
\item There exists at most one critical value of $(\lambda, \kappa)$, and if $r$ is such a value then $m_\lambda(r)$ is odd.
\item The nilpotent orbit in $\Lie Sp_{2n} (\Ftbar)$ parametrized by $(\lambda, \kappa)$ contains a regular nilpotent element in some Levi subalgebra.
\item $\mu_1=\mu_2=\cdots=\mu_{l(\nu)}=\mu_{l(\nu)+1}.$ (This condition is still satisfied when $\mu=\emptyset$ or $\nu=\emptyset$.)
\end{enumerate}
\end{thm}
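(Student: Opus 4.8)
The plan is to split the statement into a purely combinatorial half relating (1) and (4), a Lie-theoretic half relating (2) and (3), and then to join the two halves by tracking the bijection $\iota$ through the distinguished/induced decomposition. By Theorem \ref{thm:bckre} condition (1) is already equivalent to $L(\mu,\nu)=0$, so it suffices to reformulate the latter. Set $\Lambda=(a_1,a_2,\dots)$ and $b_i=a_i+a_{i+1}$, so that $P:=\mu+\nu=(b_1,b_3,b_5,\dots)$ and $P':=\mu_{\geq 2}+\nu=(b_2,b_4,\dots)$. Since $\Lambda$ is a quasi-partition, $(b_i)$ is weakly decreasing, giving the interlacing $P_i\geq P'_i\geq P_{i+1}$. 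Reading off Lemma \ref{lem:hm} shows $m_{\mu,\nu}(r)=\min(m_P(r),m_{P'}(r))$, and combining this with $L(\mu,\nu)+\sum_r m_{\mu,\nu}(r)=l(\mu,\nu)=\sum_r m_{P'}(r)$ yields $L(\mu,\nu)=\sum_r\max(0,m_{P'}(r)-m_P(r))$. Hence $L(\mu,\nu)=0$ if and only if $P'$ is a sub-multiset of $P$.

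\textbf{Finishing (1) $\Leftrightarrow$ (4).} The implication (4) $\Rightarrow L=0$ is transparent: if $\mu_1=\dots=\mu_{l(\nu)+1}$ then $P_i=P'_i$ for $i\leq l(\nu)$, while for $i>l(\nu)$ one has $\nu_i=0$, so $P'$ is exactly $P$ with the single part $P_{l(\nu)+1}=\mu_{l(\nu)+1}$ deleted; in particular $P'\subset P$. For the converse I would argue contrapositively: if $k\leq l(\nu)$ is minimal with $\mu_k>\mu_{k+1}$, then using the interlacing $P_i\geq P'_i\geq P_{i+1}$ together with $\nu_k>0$ one checks that some value has strictly larger multiplicity in $P'$ than in $P$, so $L(\mu,\nu)>0$. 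This is an elementary run-length analysis of the decreasing sequence $(b_i)$ and does not require any representation theory.

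\textbf{The equivalences (2) $\Leftrightarrow$ (3) and the bridge (3) $\Leftrightarrow$ (4).} For (2) $\Leftrightarrow$ (3) I would use the decomposition $N\simeq\tilde N\oplus N'$ recalled before the theorem, with $\tilde N$ distinguished of type $(\tilde\lambda,\tilde\kappa)$ and $N'$ regular in a Levi of type $A$. Then $N$ is regular in some Levi if and only if $\tilde N$ is regular in $\Lie Sp_{|\tilde\lambda|}$, i.e. if and only if $\tilde\lambda$ is empty or a single Jordan block. Because the parts of $\tilde\lambda$ are precisely the critical values, occurring with multiplicity $\tilde m\in\{1,2\}$, and the regular nilpotent of $\fsp_{2m}$ in characteristic $2$ is the single block with odd multiplicity, this is exactly the assertion ``at most one critical value, and of odd multiplicity,'' which is (2). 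To bridge to the combinatorial side I would then establish (3) $\Leftrightarrow$ (4) by computing the image under $\iota$ of the Levi-regular orbits: such an orbit is an orthogonal sum of an $\fsp_{2m}$-regular block $((2m),m)$ and $\mathfrak{gl}_{a_i}$-regular blocks, which in characteristic $2$ have type $(a_i,a_i)$ with $\kappa\equiv 0$; under $\iota$ the former yields a single-block datum $c=m$ in an odd slot and each of the latter a length-two block with $(c_j,c_{j+1})=(0,a_i)$. Reassembling and reading off $\mu,\nu$, I expect to recover precisely the bipartitions with $\mu_1=\dots=\mu_{l(\nu)+1}$, and conversely.

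\textbf{The main obstacle.} The delicate step is this last bridge. While the decomposition $N\simeq\tilde N\oplus N'$ is clean Lie-theoretically, $\iota$ interleaves the symplectic and type-$A$ data in a way sensitive both to the reordering of the Jordan blocks of $\lambda$ and to the characteristic-$2$ phenomenon that the Levi-induced part carries $\kappa\equiv 0$ — so that, for instance, the two orbits of Jordan type $(4,2,2)$ are separated by whether $\kappa(2)$ equals $0$ or $1$, only the former being Levi-regular, and this is exactly what distinguishes $\iota^{-1}((2,2),\emptyset)$ (with $L=0$) from $\iota^{-1}((2,1),(1))$ (with $L=1$). Controlling how the block structure defining $\iota$ interacts with the critical values — equivalently, producing a direct dictionary between the critical values of $(\lambda,\kappa)$ and the parts of $\mu,\nu$ witnessing (4) — is the part needing the most care, and I would isolate it as a separate lemma, drawing on the analysis of critical values and induction in \cite{kim:liesp}.
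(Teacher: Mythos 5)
Your proposal is correct and follows essentially the same route as the paper's proof: the equivalence (2) $\Leftrightarrow$ (3) via the decomposition $N \simeq \tilde{N}\oplus N'$ into distinguished and induced parts, an explicit computation of $\iota$ on the orbits satisfying (2)/(3) to bridge to (4), and a run/parity analysis of the weakly decreasing sequence $(b_i)=(a_i+a_{i+1})$ for (1) $\Leftrightarrow$ (4). Two organizational differences are worth noting, both to your credit: your reformulation of $L(\mu,\nu)=0$ as the sub-multiset condition ``$\mu_{\geq 2}+\nu$ is contained in $\mu+\nu$'' is a clean intermediate criterion the paper never states (the paper instead argues directly with the intervals $I(r)=[s(r),e(r)]$ and their endpoint parities, splitting into the cases $l(\Lambda)$ even and odd, which is the same run-length analysis your contrapositive step requires — your ingredients, minimality of $k$ and $\nu_k>0$, are exactly the ones that make it close); and routing the bridge through (3) $\Leftrightarrow$ (4) instead of the paper's (2) $\Leftrightarrow$ (4) is immaterial once (2) $\Leftrightarrow$ (3) is in place. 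One detail in your dictionary does need correction: the Levi-induced pairs $(a_i,a_i)$ do not carry $\kappa\equiv 0$ in general — if $2b$ is the critical value then $\kappa(a_i)=b$ for $a_i\geq 2b$ and $\kappa(a_i)=a_i-b$ for $b\leq a_i<2b$ (this is the paper's computation following the theorem), so the corresponding 2-blocks of $\iota$ contribute $(b,a_i-b)$ or $(a_i-b,b)$ rather than $(0,a_i)$; since this interleaving is precisely what you isolate as the step requiring a separate lemma, the slip affects the execution rather than the architecture, and the reassembled bipartitions are still exactly those with $\mu_1=\cdots=\mu_{l(\nu)+1}$.
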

\begin{proof} (2) $\Leftrightarrow$ (3) is easily deduced from the argument right above the theorem. (This is equivalent to saying that $\tilde{N}$ is a regular nilpotent element of $\Lie Sp_{|\tilde{\lambda}|}(\Ftbar)$.) The equivalence of (2) and (4) is easily shown using the definition of $\iota$. 

We prove (1) $\Rightarrow$ (4). Let us set $\Lambda=(\mu_1, \nu_1, \mu_2, \nu_2, \ldots)=(a_1, a_2, a_3, a_4, \ldots)$ and for each $r \in (\mu+\nu)\cup(\mu_{\geq 2} +\nu)$ we attach the interval $I(r)=[s(r), e(r)]$ where $s(r)$ (resp. $e(r)$) is the smallest (resp. largest) index so that $a_{s(r)}+a_{s(r)+1} = r$ (resp. $a_{e(r)}+a_{e(r)+1} = r$). Then (1) is equivalent to that there is no $r$ such that $s(r)$ and $e(r)$ are both even. Since these intervals are pairwise disjoint and their union is $[1, l(\Lambda)]$, we have two possible cases:
\begin{enumerate}
\item If $l(\lambda)$ is even, i.e. $l(\mu)\leq l(\nu)$, then all the intervals are of the form $[$odd, even$]$.
\item If $l(\lambda)$ is odd, i.e. $l(\mu)>l(\nu)$, then there exists a unique $r' \in (\mu+\nu)\cup(\mu_{\geq 2} +\nu)$ so that if $r>r'$ then $I(r)$ is $[$odd, even$]$, $I(r')$ is $[$odd, odd$]$, and if $r<r'$ then $I(r)$ is $[$even, odd$]$. 
\end{enumerate}
In the first case, direct calculation shows that $a_1=a_3=\cdots = a_{l(\Lambda)+1}=0$, i.e. $\mu_1=\mu_2=\cdots=\mu_{l(\nu)}=\mu_{l(\nu)+1}=0$. In the second case, similarly we have $a_1=a_3=\cdots=a_{e(r')}$ and $a_{e(r')+1}=\cdots=a_{l(\Lambda)+1}=0$, i.e. $l(\nu)\leq (e(r')-1)/2$ and $\mu_1=\mu_2=\cdots=\mu_{(e(r')+1)/2}$. This proves (1) $\Rightarrow$ (4). Conversely, if $\mu_1=\mu_2=\cdots=\mu_{l(\nu)}=\mu_{l(\nu)+1}$ then it is easy to deduce (1) using the definition of $\iota$. It suffices for the proof.
\end{proof}
Suppose that $(\lambda, \kappa) \in \Omega$ satisfies the second condition of Theorem \ref{thm:bcpos}. Let $2b$ be the unique critical value of $(\lambda, \kappa)$. (If there is no critical value, then we set $2b=0$.) From the definition of a critical value, for any $r\in \lambda$ we have $\kappa(r) = b$ if $r\geq 2b$, $\kappa(r) = r-b$ if $b\leq r<2b$, and $\kappa(r)=0$ if $r<b$. Thus, if we let $\lambda=(a_1, a_1, a_2, a_2, \ldots, a_m, a_m, b, a_{m+1}, a_{m+1}, \ldots, a_k, a_k)$ where $a_1\geq \cdots \geq a_m>b\geq a_{m+1} \geq \cdots \geq a_k$ then by direct calculation we have
\[\iota(\lambda, \kappa) = ((b, b, \ldots, b, a_{m+1}, \ldots, a_k), (a_1-b, a_2-b, \ldots, a_m-b))\]
where $b$ is repeated $m+1$ times in the first factor. 
Note that this is equivalent to the description of $\Phi$ in Section \ref{sec:pos} for type $BC_n$, thus it proves Theorem \ref{thm:pos} in type $BC_n$.

\subsection{Cyclic sieving}
We prove Theorem \ref{thm:cyc} in our setting. Here we set $t=2ns+1$ for some $s \in \bN$ (note that the Coxeter number of $W$ is $2n$). As in \cite[6.1]{rs18}, for $d \in \bN$ we set $d^-=d/\gcd(d,2)$ and $d^+ = 2d^-$. Then for any $N \in \bN$ we have $d \mid 2N \Leftrightarrow d^+ \mid 2N \Leftrightarrow d^- \mid N.$ 

Recall that each parabolic subgroup $P \subset W$ may be identified with $H_{b}\times \sym_{a_1} \times \cdots \times \sym_{a_k}$ where $H_{b}$ is a Coxeter group of type $BC_{b}$ and $b+a_1+\cdots+a_k = n$.  (Here $b$ can be zero.) Without loss of generality we may assume that $a_1\geq \cdots\geq a_k$. Here we set $(\lambda, \kappa) \in \Omega$ as described at the end of \ref{sec:bcposmore} and let $(\mu,\nu) = \iota(\lambda, \kappa)$ so that $\Phi(P) = \chi^{(\mu,\nu)}$.

If we regard $W$ as the Weyl group of $Sp_{2n}(\bC)$ then $P$ corresponds to the nilpotent orbit of $\Lie  Sp_{2n}(\bC)$ whose elements are of Jordan type $\lambda$. Therefore, using the result of \cite[Section 6]{rs18}, here we only need to show that:
\begin{enumerate}
\item If $d \mid 2ns$ but $d \nmid 2\floor{m_\lambda(r)/2}$ for some $r$, then $\Krew(W, \Phi(P), 2ns+1;\omega_d)=0$.
\item If $d \mid 2ns$ and $d \mid 2\floor{m_\lambda(r)/2}$ for every $r$, then
\[\Krew(W, \Phi(P), 2ns+1;\omega_d) =\binom{ns/d^-}{ns/d^- - \floor{l(\lambda)/2}/d^-, \floor{m_\lambda(\bZ_{>0})/2}/d^-}.\]
(Here we use the fact that $m_\lambda(r)$ is odd for at most one $r\in \lambda$, so $\sum_{r\in \bZ_{>0}}\floor{m_\lambda(r)/2} =\floor{l(\lambda)/2}$.)
\end{enumerate}
Let us embark on the proof. From Theorem \ref{thm:bckre}, $\Krew(W, \Phi(P), 2ns+1;\omega_d) $ is equal to
\[\omega_d^{(2ns+1)(n-l(\mu,\nu))+l(\mu,\nu)^2-2z(\mu,\nu) +d(\mu,\nu)-n}
\qbinom{ns}{ns-l(\mu,\nu),m_{\mu,\nu}(\bZ_{>0})}_{\omega_d^2}.\]
Here, $\Phi(P)=(\mu,\nu) = ((b, b, \ldots, b, a_{m+1}, \ldots, a_k), (a_1-b, a_2-b, \ldots, a_m-b))$ where $m\in [0,k]$ is the largest index so that $a_m>b$ and $b$ is repeated $m+1$ times in $\mu$. Also, note that in this case $L(\mu,\nu)=0$.
Now we observe the following lemma.
\begin{lem} Let $\lambda, \mu,\nu$ be as above. Then we have $m_{\mu,\nu}(b) = (m_\lambda(b)-1)/2$ and $m_{\mu,\nu}(r) = m_\lambda(r)/2$ if $r \neq b$. In any case, we have $m_{\mu,\nu}(r) = 
\floor{m_{\lambda}(r)/2}$. Also, it follows that $l(\mu,\nu) = \sum_{r\geq 1} m_{\mu,\nu}(r)= \floor{l(\lambda)/2}$.
\end{lem}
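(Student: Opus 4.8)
The plan is to reduce the entire statement to the single identity $m_{(\mu+\nu)\cup(\mu_{\geq 2}+\nu)}(r)=m_\lambda(r)$, after which the defining formula $m_{\mu,\nu}(r)=\floor{m_{(\mu+\nu)\cup(\mu_{\geq 2}+\nu)}(r)/2}$ yields everything. First I would record $\mu$ and $\nu$ componentwise: since $\mu=(b,\dots,b,a_{m+1},\dots,a_k)$ with $m+1$ copies of $b$ and $\nu=(a_1-b,\dots,a_m-b)$, I have $\mu_i=b$ for $i\le m+1$, $\mu_{m+1+j}=a_{m+j}$ for $j\ge 1$, and $\nu_i=a_i-b$ for $i\le m$ with $\nu_i=0$ otherwise. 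Adding termwise, and keeping track of the index shift built into $\mu_{\geq 2}$, gives
\[
\mu+\nu=(a_1,\dots,a_m,b,a_{m+1},\dots,a_k),\qquad \mu_{\geq 2}+\nu=(a_1,\dots,a_k).
\]
The cancellation $b+(a_i-b)=a_i$ in the first $m$ slots is exactly what makes both expressions collapse into lists of the $a_i$'s, with one surplus $b$ in the former.

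Next I would take the multiset union of these two partitions. Each value $a_i$ is contributed once by $\mu+\nu$ and once by $\mu_{\geq 2}+\nu$, whereas the lone entry $b$ comes only from $\mu+\nu$; hence the multiplicity of any $r$ in $(\mu+\nu)\cup(\mu_{\geq 2}+\nu)$ equals $2\,\#\{i:a_i=r\}$, augmented by one more when $r=b$. Comparing this with the explicit shape $\lambda=(a_1,a_1,\dots,a_m,a_m,b,a_{m+1},a_{m+1},\dots,a_k,a_k)$, in which each $a_i$ occurs twice and $b$ occurs once, I read off the very same count for $m_\lambda(r)$. This establishes $m_{(\mu+\nu)\cup(\mu_{\geq 2}+\nu)}(r)=m_\lambda(r)$ and therefore $m_{\mu,\nu}(r)=\floor{m_\lambda(r)/2}$ for every $r$.

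The two special cases now drop out of a parity observation: the count $2\,\#\{i:a_i=r\}$ is even, so $m_\lambda(r)$ is odd precisely when $r=b$ (and only when $b>0$). Thus $m_{\mu,\nu}(b)=(m_\lambda(b)-1)/2$ and $m_{\mu,\nu}(r)=m_\lambda(r)/2$ for $r\neq b$. For the length statement I would invoke $L(\mu,\nu)=0$ (already noted, since $\Phi(P)$ lies in $\im\Phi$), so by the defining relation $l(\mu,\nu)=L(\mu,\nu)+\sum_{r\ge 1}m_{\mu,\nu}(r)=\sum_{r\ge 1}m_{\mu,\nu}(r)$. Finally, summing $\floor{m_\lambda(r)/2}$ over $r$ and using that at most one multiplicity (namely at $r=b$) is odd gives $\sum_{r\ge 1}\floor{m_\lambda(r)/2}=\floor{l(\lambda)/2}$, which completes the chain of equalities.

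The argument is essentially bookkeeping, so the only place demanding genuine care is the termwise addition producing the two partitions displayed above: one must correctly handle the off-by-one shift in $\mu_{\geq 2}$ together with the extra copy of $b$ in $\mu$, and check that the degenerate case $b=0$ (where $a_{m+1}=\cdots=a_k=0$, the lone $b$ disappears from $\lambda$, and the $r=b$ clause is vacuous) remains consistent. Once the two partition descriptions are secured, recognizing their multiset union as the parts of $\lambda$ is the crux, and every remaining step is formal.
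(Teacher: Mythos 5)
Your proof is correct and takes essentially the same approach as the paper: the paper's entire proof is the remark that the lemma ``is straightforward from the description of $(\mu,\nu)$,'' and your computation of $\mu+\nu=(a_1,\dots,a_m,b,a_{m+1},\dots,a_k)$ and $\mu_{\geq 2}+\nu=(a_1,\dots,a_k)$, followed by matching the multiset union against $\lambda=(a_1,a_1,\dots,a_k,a_k)\cup(b)$, is precisely that verification carried out in detail. The parity observation, the use of $L(\mu,\nu)=0$ (noted in the paper just before the lemma), and the check of the degenerate case $b=0$ are all consistent with the paper's setup, so there is nothing to correct.
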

\begin{proof} It is straightforward from the description of $(\mu,\nu)$.
\end{proof}
\begin{rmk} In general if $\iota(\lambda, \chi) = (\mu,\nu)$ (see \ref{sec:liesp} for the definition of $\iota$) then we have:
\begin{enumerate}[label=\textbullet]
\item if $r$ is not a critical value, then $m_{\mu,\nu}(r) = m_\lambda(r)/2$.
\item if $m_\lambda(r)$ is odd, then ($r$ is critical and) $m_{\mu,\nu}(r) = (m_\lambda(r)-1)/2$.
\item if $m_\lambda(r)$ is even and $r$ is a critical value, then $m_{\mu,\nu}(r) = (m_\lambda(r)-2)/2$.
\end{enumerate}
\end{rmk}
We also recall a useful lemma when calculating the specialization of $q$-multinomials.
\begin{lem}[{\cite[Lemma 6.3]{rs18}}] \label{lem:calqbin} Suppose that $d \mid 2N$. Then for a sequence of nonnegative integers $a_1, \ldots, a_l$ so that $N=a_1+\cdots+a_l$, we have
\[\qbinom{N}{a_1, \ldots, a_l}_{\omega_d^2} = \left\{
\begin{aligned}
&\binom{N/d^-}{a_1/d^-, \ldots, a_l/d^-} &\textnormal{ if } d^- \mid a_i \textnormal{ for each } i,
\\&0 & \textnormal{ otherwise}.
\end{aligned}
\right.
\]
\end{lem}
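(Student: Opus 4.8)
The plan is to reduce the evaluation of the $q$-multinomial at $y=\omega_d^2$ to the classical $q$-Lucas theorem, the only nontrivial input being the arithmetic that makes the doubling disappear.

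First I would record the elementary reductions. I claim that $\omega_d^2$ is a primitive $d^-$-th root of unity: if $d$ is odd then $d^-=d$ and, as $\gcd(2,d)=1$, the element $\omega_d^2$ still has order $d=d^-$; if $d$ is even then $d^-=d/2$ and $\omega_d^2$ has order $d/\gcd(2,d)=d/2=d^-$. In either case its order is exactly $d^-$. Next I claim that the hypothesis $d\mid 2N$ is equivalent to $d^-\mid N$: if $d$ is odd then $d\mid 2N$ together with $\gcd(d,2)=1$ gives $d=d^-\mid N$, and if $d$ is even then $2N=dk$ yields $N=(d/2)k=d^-k$. Setting $m:=d^-$ and $\zeta:=\omega_d^2$, I have thus reduced the lemma to the following clean situation: $\zeta$ is a primitive $m$-th root of unity and $m\mid N$.

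Second, I would invoke the $q$-Lucas theorem in the form: for a primitive $m$-th root of unity $\zeta$ and integers $0\le b\le a$,
\[
\qbinom{a}{b}_\zeta=\binom{\lfloor a/m\rfloor}{\lfloor b/m\rfloor}\,\qbinom{a\bmod m}{b\bmod m}_\zeta,
\]
and apply it to the telescoping factorization
\[
\qbinom{N}{a_1,\ldots,a_l}_\zeta=\prod_{i=1}^{l}\qbinom{A_i}{a_i}_\zeta,\qquad A_i:=a_i+a_{i+1}+\cdots+a_l .
\]
The key observation is that as long as $a_1,\ldots,a_{i-1}$ are all divisible by $m$, the partial sum $A_i=N-(a_1+\cdots+a_{i-1})$ is divisible by $m$ as well, using $m\mid N$; hence $A_i\bmod m=0$ and the theorem specializes to $\qbinom{A_i}{a_i}_\zeta=\binom{A_i/m}{\lfloor a_i/m\rfloor}\,\qbinom{0}{a_i\bmod m}_\zeta$.

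Third, the dichotomy falls out of the residue factor $\qbinom{0}{a_i\bmod m}_\zeta$, which equals $1$ when $m\mid a_i$ and $0$ otherwise. If every $a_i$ is divisible by $m$, all residue factors are $1$ and the integer binomials telescope to the ordinary multinomial $\binom{N/m}{a_1/m,\ldots,a_l/m}=\binom{N/d^-}{a_1/d^-,\ldots,a_l/d^-}$. If some $a_i$ is not divisible by $m$, I take the smallest such index $i$; all earlier $a_j$ are divisible by $m$, so the computation above applies to the $i$-th factor and produces $\qbinom{0}{a_i\bmod m}_\zeta=0$, forcing the whole product to vanish. This is precisely the two-case statement. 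The only genuinely delicate points are the two reductions in the first paragraph — that $\omega_d^2$ has order exactly $d^-$ and that $d\mid 2N$ forces $d^-\mid N$ — each of which I would settle by splitting on the parity of $d$; the $q$-Lucas theorem is standard and the telescoping bookkeeping is routine once $m\mid N$ is in hand.
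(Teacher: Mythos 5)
Your proof is correct, but there is nothing in the paper to compare it against: the paper does not prove this lemma at all, it simply imports it by citation from Reiner--Sommers \cite[Lemma 6.3]{rs18} (the preceding text only records the parity facts $d \mid 2N \Leftrightarrow d^- \mid N$ and the definition of $d^-$, which are exactly the two reductions you flag as delicate, and your handling of both is right). Your argument --- reduce to the observation that $\omega_d^2$ is a primitive $d^-$-th root of unity, factor the $q$-multinomial as the telescoping product $\prod_{i}\qbinom{A_i}{a_i}$ with $A_i = a_i + \cdots + a_l$, and apply the $q$-Lucas theorem to each factor --- is the standard proof of this statement in the literature, and the one subtle point in the vanishing case (choosing the \emph{smallest} index $i$ with $d^- \nmid a_i$, so that the top entry $A_i$ is still divisible by $d^-$ and the residue factor $\qbinom{0}{a_i \bmod d^-}_{\omega_d^2}$ kills the product) is handled correctly.
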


Now suppose that $d \mid 2ns$ but $d \nmid 2\floor{m_\lambda(r)/2}=2m_{\mu,\nu}(r)$ for some $r$. Then it is clear that \[\qbinom{ns}{ns-l(\mu,\nu),m_{\mu,\nu}(\bZ_{>0})}_{\omega_d^2}=0, \]
thus $\Krew(W, \Phi(P), 2ns+1;\omega_d)=0$ as required. From now on we assume that $d \mid 2ns$ and $d \mid 2\floor{m_\lambda(r)/2}=2m_{\mu,\nu}(r)$ for every $r\in \bZ_{>0}$. (It also follows that $d\mid2\floor{l(\lambda)/2} =2l(\mu,\nu)$.) Then by Lemma \ref{lem:calqbin}, $\Krew(W, \Phi(P), 2ns+1;\omega_d)$ equals
\[\omega_d^{(2ns+1)(n-l(\mu,\nu))+l(\mu,\nu)^2-2z(\mu,\nu) +d(\mu,\nu)-n}
\binom{ns/d^-}{ns/d^--\floor{l(\lambda)/2}/d^-,\floor{m_{\lambda}(\bZ_{>0})/2}/d^-}.\]
Thus it remains to show that the power of $\omega_d$ in the above expression is 1.

In our case, it is easy to see that each of $\mu^T_i-1$ and $\nu^T_i$ are the sum of $m_{\mu,\nu}(r)$ for some $r$, and thus $z(\mu,\nu) = 2z(\mu)+2z(\nu)+|\nu|$ is easily seen to be a multiple of $d^-$. (Recall that $2z(\mu) = \sum_{i\geq 1}\mu_i^T(\mu_i^T-1)$ and $2z(\nu) = \sum_{i\geq 1}\nu_i^T(\nu_i^T-1)$.) Thus we have
\begin{align*}
(2ns+1)(n-l(\mu,\nu))+l(\mu,\nu)^2-2z(\mu,\nu) +d(\mu,\nu)-n \equiv 0 \pmod {d^-},
\end{align*}
since $d^- \mid d(\mu,\nu)= \sum_{r >0} m_{\mu,\nu}(r)(m_{\mu,\nu}(r)+1)$. 

Therefore, if $d$ is odd then $d=d^-$ and it suffices for the proof. It remains to assume that $d$ is even and $d=2d^-$. Here we have
\begin{align*}
\\&(2ns+1)(n-l(\mu,\nu))+l(\mu,\nu)^2-2z(\mu,\nu) +d(\mu,\nu)-n 
\\&\equiv l(\mu,\nu)^2-l(\mu,\nu)+d(\mu,\nu) 
\\&\equiv \left(\sum_{r\geq 1}m_{\mu,\nu}(r)\right)^2+\sum_{r\geq 1}m_{\mu,\nu}(r)^2
\\&\equiv 2\sum_{r\geq 1}m_{\mu,\nu}(r)^2+2\sum_{r>s \geq1}m_{\mu,\nu}(r)m_{\mu,\nu}(s) \equiv 0  \pmod d.
\end{align*}
Thus the claim also holds when $d$ is even.

\section{Type $H_3$}\label{sec:h3}
In this section we assume that $W$ is of type $H_3$. 
\subsection{$q$-Kreweras numbers and positivity.} Here we list $\Krew(W, \chi, t;q)$ for $\chi \in \Irr(W)$. (We acknowledge that these numbers were also calculated by Eric Sommers.)
\begin{center}
\begin{tabular}{| >{$}c<{$} | >{$}c<{$} | >{$}c<{$} | >{$}c<{$}|}
\hline
\Irr(W) & \textnormal{$q$-Kreweras}&\br{Q_{\chi}, V}& parabolic
\\\hline
\phi_{1,15}&[t-1][t-5][t-9]/([2][6][10])&q^9 + q^5 + q&triv
\\\phi_{1,0}&q^{3t-3}&0&H_3
\\\phi_{5,5}&q^{2t-10}[t-1]/[2]&q&A_1\times A_1
\\\phi_{5,2}&q^{2t-6}(q-1)[t-1]&q&
\\\phi_{3,6}&q^{t-7}(q-1)[t-1][t-5]/[2]&q^5 + q&
\\\phi_{3,8}&q^{t-9}[t-1][t-5]/[2]^2&q^5 + q&A_1
\\\phi_{3,1}& q^{2t-4}(q-1)[t-1]&q&
\\\phi_{3,3}& q^{2t-6}[t-1]/[2]&q&I_{2}(5)
\\\phi_{4,3}& q^{2t-8}(q-1)[t-1]&q&
\\\phi_{4,4}&q^{2t-8}[t-1]/[2]&q&A_2
\\\hline
\end{tabular}
\end{center}
The first column represents the irreducible representations of $W$. Here we use Carter's notation \cite[13.2]{car93}. The second column represents corresponding $q$-Kreweras numbers. The third column represents $\br{Q_\chi, V}$ where $V$ is the reflection representation of $W$. The fourth column represents the type of each parabolic subgroup $P \subset W$ where $\Phi(P) = \chi$. Here we do not distinguish different parabolic subgroups of type $A_1$ as they all correspond to the same representation $\phi_{3,8}$. 

It is clear from the formula that if $t \equiv 1,5,9 \pmod{10}$ we have $\Krew(W, \chi, t;q) \in \bN[q]$ if and only if $\chi \in \im \Phi$.

\subsection{Relation with $q$-Narayana numbers} We have
\[\Nar(W,k,t;q) = (q^{t-4k-1})^{3-k} \qbinom{3}{k}_{q^4} \frac{(q^{t-1};q^{-4})_k}{(q^{2};q^4)_k},\]
or more precisely
\begin{align*}
\Nar(W,0,t;q) &= q^{3(t-1)},
\\\Nar(W,1,t;q) &= q^{2(t-5)} (q^8+q^4+1) \frac{[t-1]}{[2]},
\\\Nar(W,2,t;q) &= q^{t-9}  (q^8+q^4+1) \frac{[t-1][t-5]}{[2][6]},
\\\Nar(W,3,t;q) &=  \frac{[t-1][t-5][t-9]}{[2][6][10]}.
\end{align*}
Thus Theorem \ref{thm:nar} holds in this case by direct calculation.

\subsection{Cyclic sieving}
We identify $W$ with $\br{s_1, s_2, s_3 \mid s_1^2, s_2^2, s_3^3, (s_1s_2)^5, (s_2s_3)^3, (s_1s_3)^2}$ and fix a (standard) Coxeter element $c=s_1s_2s_3$. Let us label all the reflections in $W$ as
\begin{align*}
&r_1=s_1,
&&r_2=s_2,
&&r_3=s_{1232321},
&&r_4=s_{232},
&&r_5=s_{31231},
\\&r_6=s_{121},
&&r_7=s_{123123121},
&&r_8=s_{2312312312312},
&&r_9=s_{323123123},
&&r_{10}=s_{323},
\\&r_{11}=s_3,
&&r_{12}=s_{12321},
&&r_{13}=s_{2312312},
&&r_{14}=s_{31231231231},
&&r_{15}=s_{23232},
\end{align*}
and all the products of two reflections in $W$ as
\begin{align*}
&o_1=s_{12},
&&o_2=s_{12312321},
&&o_3=s_{1232312312},
&&o_4=s_{23231231},
&&o_5=s_{3123},
\\&o_6=s_{23},
&&o_7=s_{1231},
&&o_8=s_{1232},
&&o_9=s_{231231},
&&o_{10}=s_{123232},
\\&o_{11}=s_{31},
&&o_{12}=s_{123121},
&&o_{13}=s_{123123123121},
&&o_{14}=s_{323123123123},
&&o_{15}=s_{323123}.
\end{align*}
(Here, $s_{abc\cdots} = s_as_bs_c \cdots$.)
Then we have $\NC^{(1)}(W) = \{id, c\} \cup \{r_i\mid r \in [1,15]\} \cup \{o_i \mid i \in [1,15]\}$. Moreover, direct calculation shows that $cr_{i}c^{-1} = r_{i+1}$ and $co_{i}c^{-1} = o_{i+1}$ for $i \not\in \{5, 10, 15\}$, and  $cr_{i}c^{-1} = r_{i-4}$ and $co_{i}c^{-1} = o_{i-4}$ for $i \in \{5, 10, 15\}$.

From now on we calculate the fixed points of $\NC^{(s)}(W)$ under the $\bZ/10s$-action defined in \ref{sec:cycsi} case-by-case. To this end, recall the definition of $\delta$-sequence \[(\delta_1, \ldots, \delta_{s-1}, \delta_s) = (w_1^{-1}w_2, \ldots, w_{s-1}^{-1}w_s, w_{s}^{-1}c)\]
for each $(w_1, w_2, \ldots, w_{s}) \in \NC^{(s)}$. It is clear that $\delta_i \in \NC^{(1)}(W)$ for each $i$. In terms of $\delta$-sequence the cyclic action is given by
\[(\delta_1, \ldots, \delta_{s-1}, \delta_s)  \mapsto (c\delta_s c^{-1}, \delta_1, \ldots, \delta_{s-1}). \]
From now on let us write $\varphi(w_1,s,d)$ to denote the number of fixed points in $\NC^{(s)}(W)$ starting with $w_1 \in W$ by an order $d$ element (where $d\mid 10s$). Also let $\Delta$ be the sequence obtained from $(\delta_1, \ldots, \delta_{s-1}, \delta_s)$ by removing the identity elements.
\begin{enumerate}
\item $w_1= c$. In this case $\Delta = \emptyset$. Thus $\varphi(c, s,d) = 1$.
\item $w_1=o_i$ for some $i\in [1,15]$. In this case $\Delta= (o_i^{-1}c)$ where $o_i^{-1}c$ is a reflection. Thus in this case $\varphi(o_i,s,d) = s$ if and only if $d\mid 2$ and otherwise $\varphi(o_i,s,d) =0$.
\item $w_1= r_i$, $i \in [1,5]$. By conjugation by $c$ it suffices to consider the case when $r_i=s_1$. Then possible $\Delta$ are:
\begin{align*}
(r_2, r_{11}),\quad (r_4,r_2),\quad (r_{15}, r_4),\quad (r_{10}, r_{15}), \quad(r_{11}, r_{10}),\quad (o_{6}).
\end{align*}
Thus direct calculation shows that $\varphi(r_i, s, d) = 5s(s-1)/2+s$ if and only if $d \mid 2$ and otherwise $\varphi(r_i, s, d) = 0$.
\item $w_1= r_i$, $i \in [6,10]$. By conjugation by $c$ it suffices to consider the case that $r_i=s_{121}$. Then possible $\Delta$ are:
\begin{align*}
(r_1, r_{11}),\quad (r_{11},r_1), \quad (o_{11}).
\end{align*}
Thus direct calculation shows that $\varphi(r_i, s, d) = s^2$ if and only if $d \mid 2$ and otherwise $\varphi(r_i, s, d) = 0$.
\item $w_1= r_i$, $i \in [11,15]$. By conjugation by $c$ it suffices to consider the case that $r_i=s_{3}$. Then possible $\Delta$ are:
\begin{align*}
(r_1, r_{10}),\quad (r_5, r_{1}), \quad (r_{10},r_5), \quad (o_{5}).
\end{align*}
Thus direct calculation shows that $\varphi(r_i, s, d) = 3s(s-1)/2+s$ if and only if $d \mid 2$ and otherwise $\varphi(r_i, s, d) = 0$.
\item $w_1=id$. In this case possible $\Delta$ can be read from above calculations. Let us denote by $\varphi_{\Delta}(id, s, d)$ the number of fixed elements with fixed $\Delta$.
\begin{enumerate}
\item $\Delta=(c)$. Direct calculation shows that $\varphi_\Delta(id, s, d) = s$ if and only if $d \mid 10$ and otherwise $\varphi_\Delta(id, s, d) = 0$.
\item $\Delta=(o_i, o_i^{-1}s)$ or $\Delta=(o_i^{-1}s, o_i)$ for some $i \in [1,15]$. It is clear that $\varphi_\Delta(id, s, d) = s(s-1)/2$ if and only if $d \mid 2$ and otherwise $\varphi_\Delta(id, s, d) = 0$.
\item $\Delta = (r_i, r_j, r_k)$ for some $i,j,k \in [1,15]$. Direct calculation shows that there are 50 possible $\Delta$, and we have $\varphi_{\Delta}(id, s, d) = s(s-1)(s-2)/6$ if $d\mid 2$ and $\varphi_{\Delta}(id, s, d)=0$ if $d\neq 1,2,3, 6$. Moreover, if $3\mid s$ and $\Delta$ is one of
\[(r_{1}, r_{4}, r_{2}), (r_{2}, r_{5}, r_{3}),  (r_{3}, r_{1}, r_{4}),(r_{4}, r_{2}, r_{5}), (r_{5}, r_{3}, r_{1})\]
then the $\delta$-sequence $(\delta_1, \ldots, \delta_s)$ where $\Delta=(\delta_i, \delta_{s/3+i}, \delta_{2s/3+i})$ for some $i \in [1, s/3]$ is fixed by an element of order 3 and 6. Thus we have $\varphi_{\Delta}(id, s, 6)=\varphi_{\Delta}(id, s, 3)= s/3$. Otherwise we have $\varphi_{\Delta}(id, s, 3)= 0$.
\end{enumerate}
In sum, we have:
\[
\varphi(id, s, d) = \left\{
\begin{aligned}
&s(5s-2)(5s-4)/3 & \textnormal{ if } d\in\{1,2\},
\\&5s/3 & \textnormal{ if } 3\mid s \textnormal{ and } d \in \{3,6\},
\\&s& \textnormal{ if } d \in \{5, 10\},
\\&0 & \textnormal{ otherwise.}
\end{aligned}
\right.
\]
\end{enumerate}

Now we consider the $q$-Kreweras numbers $\Krew(W, \Phi(P), 10s+1;\omega_d)$ where $d \mid 10s$. Direct calculation shows that:
\begin{enumerate}[label=\textbullet]
\item $P$ is of type $H_3$. Then $\Phi(P) = \phi_{1,0}$. For $w\in \NC^{(1)}(W)$, $V^w \in W\cdot V^P$ if and only if $w=c$. We have $\Krew(W, \phi_{1,0},10s+1;q) = q^{30s}$, thus $\Krew(W, \phi_{1,0},10s+1;\omega_d)=1$ whenever $d \mid 10s$. This coincides with $\varphi(c, s, d)=1$. 
\item $P$ is of type $I_2(5)$. Then $\Phi(P) =\phi_{3,3}$. For $w\in \NC^{(1)}(W)$, $V^w \in W\cdot V^P$ if and only if $w=o_{i}$ for $i \in [1,5]$. We have $\Krew(W,\phi_{3,3} ,10s+1;q) = q^{20s-4}[10s]/[2]$, thus $\Krew(W,\phi_{3,3} ,10s+1;\omega_d)$ equals $5s$ if $d \mid 2$ and $0$ otherwise. Thus coincides with $\sum_{i=1}^5\varphi(o_i,s,d)$.
\item $P$ is of type $A_2$. Then $\Phi(P) =\phi_{4,4}$. For $w\in \NC^{(1)}(W)$, $V^w \in W\cdot V^P$ if and only if $w=o_{i}$ for $i \in [6,10]$. We have $\Krew(W,\phi_{4,4} ,10s+1;q) = q^{20s-6}[10s]/[2]$, thus $\Krew(W,\phi_{4,4},10s+1;\omega_d)$ equals $5s$ if $d \mid 2$ and $0$ otherwise. Thus coincides with $\sum_{i=6}^{10}\varphi(o_i,s,d)$.
\item $P$ is of type $A_1\times A_1$. Then $\Phi(P) =\phi_{5,5}$. For $w\in \NC^{(1)}(W)$, $V^w \in W\cdot V^P$ if and only if $w=o_{i}$ for $i \in [11,15]$. We have $\Krew(W,\phi_{5,5} ,10s+1;q) = q^{20s-8}[10s]/[2]$, thus $\Krew(W,\phi_{5,5},10s+1;\omega_d)$ equals $5s$ if $d \mid 2$ and $0$ otherwise. Thus coincides with $\sum_{i=11}^{15}\varphi(o_i,s,d)$.
\item $P$ is of type $A_1$. Then $\Phi(P) =\phi_{3,8}$. For $w\in \NC^{(1)}(W)$, $V^w \in W\cdot V^P$ if and only if $w=r_{i}$ for $i \in [1,15]$. We have $\Krew(W,\phi_{3,8} ,10s+1;q) = q^{10s-8}[10s][10s-4]/[2]^2$, thus $\Krew(W,\phi_{3,8},10s+1;\omega_d)$ equals $5s(5s-2)$ if $d \mid 2$ and $0$ otherwise. Thus coincides with $\sum_{i=1}^{15}\varphi(r_i,s,d)$.
\item $P=\{id\}$. Then $\Phi(P) =\phi_{1,15}$. For $w\in \NC^{(1)}(W)$, $V^w \in W\cdot V^P$ if and only if $w=id$. We have $\Krew(W,\phi_{1,15} ,10s+1;q) = [10s][10s-4][10s-8]/([2][6][10])$, thus 
\[\Krew(W,\phi_{1,15} ,10s+1;\omega_d)=\left\{
\begin{aligned}
&s(5s-2)(5s-4)/3& \textnormal{ if } d \in \{1,2\},
\\&5s/3 & \textnormal{ if } 3 \mid s \textnormal{ and } d \in \{3,6\},
\\&s & \textnormal{ if } d \in \{5,10\},
\\&0 & \textnormal{ otherwise.}
\end{aligned}
\right.
\]
This coincides with $\varphi(id,s,d)$.
\end{enumerate}
We exhaust all the possible cases and thus Theorem \ref{thm:cyc} is valid when $W$ is of type $H_3$.

%
%
%
%
%
%
%
%

%
%
%

\section{Type $I_2(m)$}\label{sec:i2m}
In this section we assume that $W$ is of type $I_2(m)$ for $m \geq 2$.
\subsection{$q$-Kreweras numbers and positivity.} Here we list $\Krew(W, \chi, t;q)$ for $\chi \in \Irr(W)$ in the same way as type $H_3$ case. When $m$ is even we have:
\begin{center}
\begin{tabular}{| >{$}c<{$} | >{$}c<{$} | >{$}c<{$} | >{$}c<{$}|}
\hline
\Irr(W) & \textnormal{$q$-Kreweras}&\br{Q_{\chi}, V}& parabolic
\\\hline
\phi_{1,0}&q^{2t-2}&0&I_{2}(m)
\\\phi_{1,m/2}'&q^{t-m+1}[t-1]/[2]&q&A_1'
\\\phi_{1,m/2}''&q^{t-m+1}[t-1]/[2]&q&A_1''
\\\phi_{1,m}&[t-1][t-m+1]/([2][m])&q^{m-1}+q&triv
\\\phi_{2,r}, r\in [1,m/2-1]&q^{t-2r-1}(q^{t-1}-1)&q&
\\\hline
\end{tabular}
\end{center}
When $m$ is odd we have:
\begin{center}
\begin{tabular}{| >{$}c<{$} | >{$}c<{$} | >{$}c<{$} | >{$}c<{$}|}
\hline
\Irr(W) & \textnormal{$q$-Kreweras}&\br{Q_{\chi}, V}& parabolic
\\\hline
\phi_{1,0}&q^{2t-2}&0&I_{2}(m)
\\\phi_{1,m}&[t-1][t-m+1]/([2][m])&q^{m-1}+q&triv
\\\phi_{2,r}, r\in [1,(m-3)/2]&q^{t-2r-1}(q^{t-1}-1)&q&
\\\phi_{2,(m-1)/2}&q^{t-m+1}[t-1]&q&A_1
\\\hline
\end{tabular}
\end{center}
It is clear from the formula that if $t \equiv \pm1 \pmod{m}$ we have $\Krew(W, \chi, t;q) \in \bN[q]$ if and only if $\chi \in \im \Phi$.
\subsection{Relation with $q$-Narayana numbers} We have
\[\Nar(W, k,t;q) = q^{(t-(m-2)k-1)(2-k)} \qbinom{2}{k}_{q^{m-2}} \frac{(q^{t-1};q^{-m+2})_k}{(q^{2};q^{m-2})_k}\]
or more precisely
\begin{align*}
\Nar(W, k,0;q)  &= q^{2(t-1)},
\\\Nar(W, k,1;q)  &= q^{t-m+1}(q^{m-2}+1) \frac{[t-1]}{[2]},
\\\Nar(W, k,2;q) &=  \frac{[t-1][t-m+1]}{[2][m]}.
\end{align*}
Thus Theorem \ref{thm:nar} holds in this case by direct calculation.

\subsection{Cyclic sieving}
We identify $W$ with $\br{s_1, s_2 \mid s_1^2, s_2^2,  (s_1s_2)^m}$ and fix a (standard) Coxeter element $c=s_1s_2$. We have $\Rx(W)= \{c^{i-1}s_1c^{1-i} \mid 1\leq i \leq (m+1)/2\} \cup \{ c^{i-1}s_2c^{1-i}\mid 1\leq i \leq m/2]\}$. One can show that each element is fixed by conjugation by $c^k$ if and only if $m \mid k$ (resp. $m/2 \mid k$) if $m$ is odd (resp. even).

From now on we calculate the fixed points of $\NC^{(s)}(W)$ under the $\bZ/ms$-action defined in \ref{sec:cycsi} case-by-case in the same manner as type $H_3$ case. Let us write $\varphi(w_1,s,d)$ to denote the number of fixed points in $\NC^{(s)}(W)$ starting with $w_1 \in W$ by an order $d$ element (where $d\mid ms$). Also let $\Delta$ be the sequence obtained from $(\delta_1, \ldots, \delta_{s-1}, \delta_s)$ by removing the identity elements.

\begin{enumerate}
\item $w_1=c$. In this case $\Delta=\emptyset$. Thus $\phi(c, s, d) = 1$.
\item $w_1$ is a reflection. In this case $\Delta=(w_1c)$. Direct calculation shows that $\phi(w_1,s,d)$ equals $s$ if $d \mid \gcd(2,m)$ and 0 otherwise.
\item $w_1=id$. Let us denote by $\varphi_{\Delta}(id, s, d)$ the number of fixed elements with fixed $\Delta$. 
\begin{enumerate}[label=\textbullet]
\item If $\Delta=(c)$, direct calculation shows that $\phi_{(c)}(w_1,s,d)$ equals $s$ if $d \mid m$ and 0 otherwise. 
\item If $\Delta$ consists of two reflections, then there are $m$ possibilities of $\Delta$.
\begin{enumerate}[label=--] 
\item If $m$ is even then for any $\Delta = (r, r')$, $r$ and $r'$ are not conjugate by powers of $c$. Thus $\varphi_{\Delta}(w_1, s, d)$ equals $s(s-1)/2$ if $d \mid 2$ and 0 otherwise. 
\item If $m$ is odd then for any $\Delta = (r, r')$ we have $r'=c^{(m-1)/2}rc^{-(m-1)/2}$. Thus we have $\varphi_{\Delta}(w_1, s, d)$ equals $s(s-1)/2$ if $d=1$ and 0 if $d \nmid 2$. Moreover, if $2\mid s$ and $\Delta=(\delta_i, \delta_{s/2+i})$ for some $i \in [1, s/2]$ then the corresponding $\delta$-sequence is fixed by an order 2 element. Therefore $\varphi_{\Delta}(w_1, s, 2)$ equals $s/2$ if $2\mid s$ and 0 otherwise.
\end{enumerate}
\end{enumerate}
In sum, we have
\[\varphi(w_1, s, d)=\left\{ 
\begin{aligned}
&s(ms-m+2)/2 & \textnormal{ if } d=1 \textnormal{ or } [2 \mid m \textnormal{ and } d=2],
\\&ms/2& \textnormal{ if } 2 \nmid m, 2\mid s, \textnormal{ and } d=2,
\\&s & \textnormal{ if } d \mid m \textnormal{ and } d \not\in \{1, 2\}, \textnormal{ and}
\\&0 & \textnormal{ otherwise.}
\end{aligned}
\right.
\]
\end{enumerate}

Now we consider the $q$-Kreweras numbers $\Krew(W, \Phi(P), ms+1;\omega_d)$ where $d \mid ms$. Direct calculation shows that:
\begin{enumerate}[label=\textbullet]
\item $P$ is of type $I_2(m)$. Then $\Phi(P) = \phi_{1,0}$. For $w\in \NC^{(1)}(W)$, $V^w \in W\cdot V^P$ if and only if $w=c$. We have $\Krew(W, \phi_{1,0},ms+1;q) = q^{2ms}$, thus $\Krew(W, \phi_{1,0},ms+1;\omega_d)=1$ whenever $d \mid ms$. This coincides with $\varphi(c, s, d)=1$. 
\item $P$ is of type $A_1$, $m$ is odd. Then $\Phi(P) = \phi_{2, (m-1)/2}$. For $w\in \NC^{(1)}(W)$, $V^w \in W\cdot V^P$ if and only if $w$ is a reflection. We have $\Krew(W, \phi_{2,(m-1)/2},ms+1;q) = q^{ms-m+2}[ms]$, thus $\Krew(W, \phi_{2,(m-1)/2},ms+1;\omega_d)$ equals $ms$ if $d=1$ and 0 otherwise. This coincides with $\sum_{r\in \Rx(W)}\varphi(r, s, d)$.
\item $P$ is of type $A_1$, $m$ is even. By symmetry it suffices to assume that $P = \{id, s_1\}$ and $\Phi(P) = \phi_{1, m/2}'$. For $w\in \NC^{(1)}(W)$, $V^w \in W\cdot V^P$ if and only if $w\in \{c^js_1c^{-j} \mid j \in \bZ\}$ (there are $m/2$ of them). We have $\Krew(W, \phi_{1,m/2}',ms+1;q) = q^{ms-m+2}[ms]/[2]$, thus $\Krew(W, \phi_{1,m/2}',ms+1;\omega_d)$ equals $ms/2$ if $d\mid 2$ and 0 otherwise. This coincides with $\sum_{j=1 }^{m/2}\varphi(c^js_1c^{-j}, s, d)$.
\item $P=\{id\}$. Then $\Phi(P) = \phi_{1, m}$.  For $w\in \NC^{(1)}(W)$, $V^w \in W\cdot V^P$ if and only if $w=id$. We have $\Krew(W, \phi_{1,m}, ms+1;q) = [ms][ms-m+2]/([2][m])$, thus
\[\Krew(W, \phi_{1,m}, ms+1;\omega_d)=\left\{ 
\begin{aligned}
&s(ms-m+2)/2 & \textnormal{ if } d=1 \textnormal{ or } [2 \mid m \textnormal{ and } d=2],
\\&ms/2& \textnormal{ if } 2 \nmid m, 2\mid s, \textnormal{ and } d=2,
\\&s & \textnormal{ if } d \mid m \textnormal{ and } d \not\in \{1, 2\}, \textnormal{ and}
\\&0 & \textnormal{ otherwise.}
\end{aligned}
\right.
\]
This coincides with $\varphi(id,s,d)$.
\end{enumerate}
We exhaust all the possible cases and thus Theorem \ref{thm:cyc} is valid when $W$ is of type $I_2(m)$.

\bibliographystyle{amsalphacopy}
\bibliography{q_kre}

\end{document}